\newtheorem{theorem}{Theorem}[section]
\newtheorem{corollary}[theorem]{Corollary}
\newtheorem{lemma}[theorem]{Lemma}
\theoremstyle{definition}
\newtheorem{example}[theorem]{Example}
\theoremstyle{remark}
\newtheorem{problem}[theorem]{Problem}
\numberwithin{equation}{section}
\def\IC{{\mathbb C}}
\def\IF{{\mathbb F}}
\def\IR{{\mathbb R}}
\def\bH{{\bf H}}
\def\bM{{\bf M}}
\def\bU{{\bf U}}
\def\b0{{\bf 0}}
\newcommand{\cC}{{\mathcal C}}
\def\tr{{\rm tr}\,}
\def\diag{{\rm diag}\,}
\def\[{\left [}
\def\]{\right ]}
\def\({\left (}
\def\){\right )}
\def\<{{\langle}}
\def\>{{\rangle}}
\def\1{{\bf 1}}
\newcommand{\bprob}{\begin{problem}}
\newcommand{\eprob}{\end{problem}}
\newcounter{example}
\begin{document}
\openup .95\jot
\title[
Nonsurjective maps preserving disjointness, triple products, or norms]{Nonsurjective maps between rectangular matrix spaces
preserving disjointness, triple products, or norms}

\dedicatory{Dedicated to Man-Duen Choi on the occasion of  his 75th birthday}

\author[C.-K. Li, M.-C. Tsai, Y.-S. Wang and N.-C. Wong]{Chi-Kwong Li, Ming-Cheng Tsai, Ya-Shu Wang \and Ngai-Ching Wong}
\address[Li]{Department of Mathematics, The College of William
\& Mary, Williamsburg, VA 13185, USA.}
\email{ckli@math.wm.edu}

\address[Tsai]{General Education Center, Taipei University of Technology 10608, Taiwan.}
\email{mctsai2@mail.ntut.edu.tw}

\address[Wang]{Department of Applied Mathematics, National Chung Hsing University, Taichung 40227, Taiwan.}
\email{yashu@nchu.edu.tw}

\address[Wong]{Department of Applied Mathematics, National Sun Yat-sen
  University, Kaohsiung, 80424, Taiwan.}
  \email{wong@math.nsysu.edu.tw}

\date{\today}
\subjclass[2010]{15A04, 15A60, 47B49}

\keywords{orthogonality preservers; matrix spaces; norm preservers;
Ky Fan $k$-norms; Schatten $p$-norms; $JB$*-triples}

\begin{abstract}
Let $\bM_{m,n}$ be the space of $m\times n$ real or complex rectangular matrices.
Two matrices $A, B \in \bM_{m,n}$ are disjoint if $A^*B = 0_n$ and
$AB^* = 0_m$.
In this paper, a characterization is given for linear maps
$\Phi: \bM_{m,n} \rightarrow \bM_{r,s}$ sending disjoint matrix pairs to
disjoint  matrix pairs, i.e., $A, B \in \bM_{m,n}$ are disjoint
ensures that $\Phi(A), \Phi(B) \in \bM_{r,s}$ are disjoint.
More precisely, it is shown that  $\Phi$
preserves disjointness if and only if $\Phi$ is of the form
$$\Phi(A) =  U\begin{pmatrix}
A \otimes Q_1  & 0 & 0 \cr
0 & A^t \otimes Q_2 & 0 \cr
0 & 0 & 0 \cr\end{pmatrix}V$$
for some unitary
matrices $U \in \bM_{r,r}$ and $V\in \bM_{s,s}$,
and positive diagonal matrices $Q_1, Q_2$, where $Q_1$ or $Q_2$ may be vacuous.
The result is used to characterize nonsurjective linear maps that preserve the $JB^*$-triple product, or just the zero triple product, on rectangular matrices, defined by $\{A,B,C\}
= \frac{1}{2}(AB^*C+CB^*A)$.
The result is also applied to characterize linear maps between
rectangular matrix spaces of different sizes preserving the Schatten $p$-norms or the Ky Fan $k$-norms.
\end{abstract}

\maketitle

\section{Introduction}\label{sec:1}

The fruitful history of linear preserver problems starts with a rather surprising result of Frobenius.  He showed in  \cite{F} that a linear map $\Phi: \bM_n(\IC) \rightarrow \bM_n(\IC)$ of $n\times n$ complex
matrices preserving determinant, i.e., $\det(A)= \det(\Phi(A))$, must be of the form $A \mapsto MAN$ or $A \mapsto MA^tN$ for some matrices $M,N \in \bM_n(\IC)$ with $\det(MN) = 1$.  Another seminal work is due to Kadison.
In  \cite{K51}, Kadison showed that a unital surjective isometry between two $C^*$-algebras ${\mathcal A}$
and ${\mathcal B}$ must be a $C^*$-isomorphism;
in particular, a linear map $\Phi: \bM_n(\IC) \rightarrow \bM_n(\IC)$
leaving the operator norm invariant must be of the form
$A \mapsto UAV$ or $A \mapsto UA^tV$ for some unitary matrices
$U, V \in \bM_n(\IC)$.

Researchers have developed many results and techniques in the
study of linear preserver problems; see, e.g., \cite{Br97, Li01, Li92}.
Many of the results have been extended in different
directions and applied to other topics such as geometrical structure
of Banach spaces, and quantum mechanics;
see, e.g., \cite{FJ02, FJ, Mo07}.
In spite of these advances, there are some intriguing
basic linear preserver problems which remain open.  In particular,
characterizing linear preservers  between different matrix or operator spaces without the surjectivity assumption is very challenging and sometimes intractable; see, for example, \cite{BKS, CL,  LPS, Li02, YCZ, ZC, ZXF}.
Even for finite dimensional spaces, the problem  is highly non-trivial.
For instance, there is no easy description of a linear norm preserver
$\Phi: M_n \rightarrow M_r$ if $n \ne r$; see \cite{CLP04}.

In this paper, we study nonsurjective linear maps between rectangular  matrix spaces
preserving disjointness, the Schatten $p$-norms, or the Ky-Fan $k$-norms. The result is used to characterize linear maps that preserve the $JB^*$-triple product, or just the zero triple product. Note that there are interesting results on disjointness preserving maps on different kinds of products over general operator spaces or algebras, see, e.g., \cite{LTW12,LCLWa,LCLWb,HLW08, HLW10}.
However, the basic problem on disjointness preservers from a
rectangular matrix space to another rectangular matrix space is unknown, and
the existing results do not cover this case. It is our hope that our study will lead to some general techniques for the study of disjointness preservers in a more general context, say, for general $JB^*$-triples, to supplement those established in the few literature, e.g.,  \cite{AP14}.

To better describe the questions addressed in this paper,
we introduce some notation.
Let $\bM_{m,n}$ be the set of $m\times n$ real or complex rectangular matrices,
and let $\bM_n = \bM_{n,n}$.
A pair of matrices $A, B \in \bM_{m,n}$ are \emph{disjoint},
denoted by
$$
A\perp B,\quad\text{if}\quad A^*B=0_n\ \text{and}\ AB^*=0_m.
$$
Here  the adjoint $A^*$ of a rectangular matrix $A$ is  its conjugate transpose $\overline{A^t}$. If $A$ is a real matrix, then $A^*$ reduces to $A^t$, the transpose of $A$. Clearly, $A$ and $B$ are disjoint if and only if they have  orthogonal ranges and initial spaces. A rectangular matrix $A$ is called a \emph{partial isometry}  if $AA^*A=A$. In this case, $A^*A$ is the \emph{range projection} and $AA^*$ is the \emph{initial projection} of $A$. Two partial isometries are {disjoint} if  and only if they have orthogonal range and initial projections.

We will characterize linear
maps $\Phi: \bM_{m,n} \rightarrow \bM_{r,s}$
that preserve disjointness,
i.e., $\Phi(A) \perp \Phi(B)$ whenever $A\perp B$,
and apply the result to some related topics.
In particular, we show in Section \ref{sec:2} that such a map has the form
$$
\Phi(A) =  U\begin{pmatrix}
A \otimes Q_1  & 0 & 0 \cr
0 & A^t \otimes Q_2 & 0 \cr
0 & 0 & 0 \cr\end{pmatrix}V
$$
for some unitary (orthogonal in the real case) matrices
$U \in \bM_r, V\in \bM_s$
and diagonal (square) matrices $Q_1, Q_2$ with positive diagonal entries,
where $Q_1$ or $Q_2$ may be vacuous.

In Section \ref{sec:3}, we regard the space of rectangular matrices as
JB*-triples
carrying the Jordan triple product $\{A,B,C\} = \frac{1}{2}(AB^*C+CB^*A)$,
and use our result in Section \ref{sec:2}
to study JB*-triple homomorphisms on rectangular matrices, i.e.,
linear maps $\Phi: \bM_{m,n}\rightarrow \bM_{r,s}$  satisfying
$$\Phi(AB^*C + CB^*A)= \Phi(A)\Phi(B)^*\Phi(C) +
\Phi(C)\Phi(B)^*\Phi(A)\quad \hbox{ for all } A,B,C \in \bM_{m,n},$$
and also linear maps preserving matrix triples with
zero Jordan triple product.

We also apply our result in Section \ref{sec:2} to study  linear maps
$\Phi: \bM_{m,n} \rightarrow \bM_{r,s}$
preserving the Schatten $p$-norms and the Ky Fan $k$-norms  in Section \ref{sec:4}.
Open problems and future research possibilities are mentioned in Section \ref{sec:5}.

 Throughout the paper, we will always assume that
 $m,n,r,s$ are positive integers, and use the following notation.
\begin{quote}
\begin{enumerate}[{}]\itemindent=-1cm
\item $\bM_{m,n}=\bM_{m,n}(\mathbb{F})$: the vector space of $m\times n$ matrices over $\IF = \IR$ or $\IC$.

\item $\bM_n=\bM_{n}(\mathbb{F})$: the set of $n\times n$ matrices over $\IF = \IR$ or $\IC$.

\item $\bU_n=\bU_n(\mathbb{F}) = \{A \in \bM_n: A^*A = I_n\}$: the set of real orthogonal
or complex unitary matrices depending on $\IF = \IR$ or $\IC$.

\item $\bH_n =\bH_n(\mathbb{F})= \{A \in \bM_n: A = A^*\}$: the set of real symmetric or complex
Hermitian matrices
depending on $\IF = \IR$ or $\IC$.
\end{enumerate}
\end{quote}

\section{Nonsurjective preservers of disjointness}\label{sec:2}

In this section, we will prove the following.

\begin{theorem} \label{thm1}
A linear map $\Phi: \bM_{m,n} \rightarrow \bM_{r,s}$
preserves disjointness, i.e.,
$$
AB^* = 0_m \ \text{and}\ A^*B = 0_n\ \implies\ \Phi(A)\Phi(B)^* = 0_r\ \text{and}\ \Phi(A)^*\Phi(B) = 0_s,
\quad\forall A, B \in \bM_{m,n},
$$
if and only if
there exist $U \in \bU_r, V\in \bU_s$ and
diagonal matrices $Q_1, Q_2$
with positive diagonal entries such that
\begin{equation}\label{s-form}
\Phi(A) =  U\begin{pmatrix}
A \otimes Q_1  & 0 & 0 \cr
0 & A^t \otimes Q_2 & 0 \cr
0 & 0 & 0 \cr\end{pmatrix}V \qquad \hbox{ for all } A \in \bM_{m,n}.
\end{equation}
Here $Q_1$ or $Q_2$,
may be vacuous.
\end{theorem}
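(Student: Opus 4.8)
The plan is to dispatch the ``if'' direction by a direct computation and to concentrate the work on ``only if''. Since $U$ and $V$ are unitary, for ``if'' it suffices to check that the block-diagonal matrix-valued map in \eqref{s-form} preserves disjointness; here one uses $(A\otimes Q_1)^*(B\otimes Q_1)=(A^*B)\otimes Q_1^2$ and $(A^t\otimes Q_2)^*(B^t\otimes Q_2)=\overline{AB^*}\otimes Q_2^2$, together with their companions for $(\cdot)(\cdot)^*$ and the fact that a product of block-diagonal matrices is again block-diagonal, so that $A^*B=AB^*=0$ forces $\Phi(A)^*\Phi(B)=\Phi(A)\Phi(B)^*=0$. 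For ``only if'', since $\Phi$ is linear it is determined by the images $F_{ij}:=\Phi(E_{ij})$ of the matrix units, so the task is to bring the $F_{ij}$ simultaneously to the asserted form. I would first dispose of the degenerate cases $\min(m,n)=1$ (where disjointness of a pair forces one member to be $0$) and assume $m,n\ge 2$ below.

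The crucial step is to squeeze out of disjointness some rigid algebraic identities among the $F_{ij}$. For $j\ne k$ one has $xe_j^*\perp ye_k^*$ exactly when $x^*y=0$; applying $\Phi$ and writing $\Phi(xe_j^*)=\sum_a x_a F_{aj}$, disjointness of the images shows that the $\bM_s$-valued expression $\sum_{a,b}\overline{x_a}\,y_b\,F_{aj}^*F_{bk}$ vanishes whenever $\sum_a\overline{x_a}y_a=0$, and similarly $\sum_{a,b}x_a\,\overline{y_b}\,F_{aj}F_{bk}^*$. Specializing $y$ to a coordinate vector forces $F_{aj}^*F_{bk}=0$ for $a\ne b$, and then testing with vectors $e_a\pm e_{a'}$ forces $F_{aj}^*F_{ak}$ to be independent of $a$; call it $Q^{jk}\in\bM_s$, and likewise $F_{aj}F_{ak}^*=:\tilde Q^{jk}\in\bM_r$. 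The symmetric ``row'' argument shows that for $i\ne l$ the products $F_{ij}^*F_{lj}=:R^{il}$ and $F_{ij}F_{lj}^*=:\tilde R^{il}$ are independent of $j$. A parallel analysis of the disjoint pairs $xy^*\perp x_\perp y_\perp^*$ (with $x\perp x_\perp$ and $y\perp y_\perp$) --- in which the ``$Q$'' and ``$R$'' contributions cancel because of the constraints $x^*x_\perp=0=y^*y_\perp$ --- shows that the ``diagonal'' products split additively: $F_{ij}^*F_{ij}=B_i+A_j$ and $F_{ij}F_{ij}^*=\tilde B_i+\tilde A_j$ for suitable $A_j,B_i\in\bM_s$ and $\tilde A_j,\tilde B_i\in\bM_r$.

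With these identities I would reconstruct the block form. Replace $\IF^r,\IF^s$ by the supports $\mathcal X=\sum_{ij}\operatorname{range}(F_{ij})$ and $\mathcal Y=\sum_{ij}\operatorname{range}(F_{ij}^*)$ (the complements feed the zero block). Let $\mathcal Y_1$ be spanned by the ranges of all $Q^{jk}$ ($j\ne k$), $\mathcal Y_2$ by the ranges of all $R^{il}$ ($i\ne l$), and define $\mathcal X_1,\mathcal X_2$ symmetrically from the $\tilde R^{il},\tilde Q^{jk}$. A computation such as $(Q^{jk})^*R^{il}=F_{ak}^*\bigl(F_{aj}F_{ib}^*\bigr)F_{lb}=0$ for a choice $a\ne i$, $b\ne j$ (using $E_{aj}\perp E_{ib}$, hence $F_{aj}F_{ib}^*=0$) gives $\mathcal Y_1\perp\mathcal Y_2$ and $\mathcal X_1\perp\mathcal X_2$; combining this with the additive splitting of the diagonal products and a range argument yields $\mathcal X=\mathcal X_1\oplus\mathcal X_2$, $\mathcal Y=\mathcal Y_1\oplus\mathcal Y_2$, and the fact that $\Phi$ respects both splittings. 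On the $\mathcal X_1$-$\mathcal Y_1$ part the identities say precisely that, in suitable orthonormal bases, $F_{ij}$ acts as $E_{ij}\otimes Q_1$ for a fixed positive definite $Q_1$ (recovered from the ranges of the $Q^{jk}$), while on $\mathcal X_2$-$\mathcal Y_2$ it acts as $E_{ij}^t\otimes Q_2$; diagonalizing $Q_1,Q_2$ and absorbing the basis changes into $U$ and $V$ gives \eqref{s-form} on matrix units, hence on all of $\bM_{m,n}$ by linearity.

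The main obstacle is this reconstruction: the identities above are ``local'' --- they involve only a few $F_{ij}$ at a time --- and assembling a single pair $U,V$ requires verifying global compatibility, most delicately that $\mathcal Y=\mathcal Y_1\oplus\mathcal Y_2$ (and its analogue for $\mathcal X$) rather than merely $\mathcal Y_1+\mathcal Y_2\subseteq\mathcal Y$. I expect the smoothest route is the range computation indicated above, using that for distinct indices $\operatorname{range}(F_{ij})=\bigl[\operatorname{range}(F_{ij'})\cap\operatorname{range}(F_{ij''})\bigr]+\bigl[\operatorname{range}(F_{i'j})\cap\operatorname{range}(F_{i''j})\bigr]$, which works cleanly for $m,n\ge 3$ and needs separate, more computational treatment when $m=2$ or $n=2$; keeping track of which of $Q_1,Q_2$ is nonvacuous (one may be absent) is an extra bookkeeping point.
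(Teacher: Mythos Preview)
Your route is genuinely different from the paper's. The paper never derives structural identities such as $F_{aj}^*F_{ak}=Q^{jk}$, $F_{ij}^*F_{lj}=R^{il}$, or the additive splitting $F_{ij}^*F_{ij}=B_i+A_j$; instead it proceeds by induction, first settling the $\bM_2\to\bM_{r,s}$ case via a family of disjoint test pairs $Z_1(\gamma)=\begin{pmatrix}\gamma&1\\1&1/\gamma\end{pmatrix}$, $Z_2(\gamma)=\begin{pmatrix}1/\gamma&-1\\-1&\gamma\end{pmatrix}$ (and a companion pair), reading off block equations from $\Phi(Z_1)^*\Phi(Z_2)=0=\Phi(Z_1)\Phi(Z_2)^*$ for all $\gamma\ne 0$, then iteratively modifying $\Phi$ by unitary conjugations to bring each new $\Phi(E_{ij})$ into the asserted block shape. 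Your identities are correct (the derivation of $Q^{jk}$, $R^{il}$ and the additive splitting via $x\perp x_\perp$, $y\perp y_\perp$ checks out), and computations like $(Q^{jk})^*R^{il}=0$ are valid and pleasant.

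The gap is exactly where you flag it, and it is more serious than your sketch suggests. First, ``$\mathcal Y=\mathcal Y_1\oplus\mathcal Y_2$'' is not a formality: you must show $\operatorname{range}(F_{ij}^*)\subseteq \mathcal Y_1+\mathcal Y_2$ for every $i,j$, and the proposed range formula $\operatorname{range}(F_{ij})=[\operatorname{range}(F_{ij'})\cap\operatorname{range}(F_{ij''})]+[\operatorname{range}(F_{i'j})\cap\operatorname{range}(F_{i''j})]$ is true in the target form but you have not indicated how to deduce it from your identities (and it is unavailable when $m=2$ or $n=2$, which is precisely the base case the paper works hardest on). Second, even granting the splitting, the sentence ``the identities say precisely that $F_{ij}$ acts as $E_{ij}\otimes Q_1$'' hides a real argument: on the $\mathcal Y_1$-side you get $G_{ij}^*G_{lk}=0$ for $i\ne l$ and $G_{ij}G_{ik}^*=0$ for $j\ne k$, but you still need to manufacture a single positive $Q_1$ and a coherent pair of orthonormal frames so that all $G_{ij}$ simultaneously become $E_{ij}\otimes Q_1$; this requires compatibility relations among the $Q^{jk}$ and $\tilde R^{il}$ (e.g.\ that the polar parts assemble into a single tensor factor) which you have not verified. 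Finally, a small correction: $\min(m,n)=1$ is not ``disposed of'' --- in that regime every linear map preserves disjointness vacuously, yet most do not have the form \eqref{s-form}, so the theorem should be read under the standing hypothesis $m,n\ge 2$ (as the paper notes in its remarks).
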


Several  remarks are in order concerning Theorem \ref{thm1}.
\begin{enumerate}
\item Observing the symmetry and avoiding the triviality,  we can  assume that
$2 \le m \le n$.

\item $AB^* = 0_m$ and $A^* B = 0_n$ mean that $A$ and $B$ have orthogonal ranges and orthogonal initial spaces.  This amounts to saying that
we can obtain their singular value decompositions,  $UAV = \sum_{j=1}^{k} a_j E_{jj}$ and
$UBV = \sum_{j=k+1}^p b_j E_{jj}$, for some positive scalars $a_1, .., a_k$, $b_{k+1}, ..., b_p$, and unitary matrices
   $U\in \bU_m$ and $V\in \bU_n$.

\item In view of the singular value decompositions, (\ref{s-form}) in Theorem \ref{thm1} holds if the condition
$$
\Phi(E)\,\bot\, \Phi(F)\quad\text{whenever}\quad E\,\bot\, F
$$
is verified
just for rank one  disjoint partial isometries $E,F$ in $\bM_{m,n}$.

\item
In Theorem \ref{thm1}, unless $r\geq m$ and $s\geq n$, or $s\geq m$ and $r\geq n$,
$\Phi$ will  be the zero map. If $(m,n) = (r,s)$ (resp.\ $(s,r)$) and $m \ne n$,
then $\Phi$ will be
the zero map or  of the form $A \mapsto UAV$ (resp.\ $A \mapsto UA^t V$)
with $U \in \bU_r, V \in \bU_s$.

\item By relaxing the terminology, the rectangular matrix $A \otimes Q_1$
is permutationally similar to
$q_{1} A \oplus \cdots \oplus q_{r} A$
if $Q_1 = \diag(q_1, \dots, q_r)$.
Similarly $A^t\otimes Q_2$ is permutationally similar to
a direct sum of positive multiples of $A^t$.
So, the theorem asserts that up to a fixed unitary equivalence
$\Phi(A)$ is a direct sum of positive multiples of $A$ and $A^t$.

\item
In addition to real and complex rectangular matrices, the conclusions in Theorem \ref{thm1}
is also valid with the same proof for a real linear map
$\Phi: \bH_n \rightarrow \bM_{r,s}$ preserving disjointness. We can further assume that
the co-domain is $\bH_r$, i.e., $\Phi: \bH_n \rightarrow \bH_r$. In this case,  the disjointness assumption on $\Phi$ reduces to that  $AB = 0$ implies $\Phi(A)\Phi(B) = 0$. Adjusting the proof of Theorem \ref{thm1}, we can achieve the equality $U = V^*$, at the expenses that the diagonal matrices $Q_1, Q_2$
may have negative entries.

\item
If the domain  is the set $\bM_n(\mathbb{C})$ of $n\times n$ complex matrices or
the set $\bH_n(\mathbb{C})$ of $n\times n$ complex Hermitian matrices,
our results can be deduced from the abstract theorems on
$C^*$-algebras; e.g., see \cite{LCLWb,LTW12,LW13,BFGMP08}, and also \cite{LCLWa,CKLW03}.
However, the proofs there do not seem to work for
rectangular matrix spaces, or real square matrix spaces.

\item
Our proof is computational and long. It would be nice to have some short and conceptual proofs.
\end{enumerate}

The rest of the section is devoted to the proof of Theorem \ref{thm1}.
We describe our proof strategy.
Let $\{E_{11}, E_{12}, \dots, E_{mn}\}$ be the
standard basis for $\bM_{m,n}$.
We will show that one can apply a series of replacements of $\Phi$ by mappings of the form $X \mapsto \tilde U \Phi(X) \tilde V$ for some  $\tilde U \in \bU_r, \tilde V \in \bU_s$ so that the resulting map satisfies
\begin{equation*}
E_{ij}\mapsto \begin{pmatrix}
E_{ij} \otimes Q_1 & 0 & 0 \cr
0 &  E_{ji} \otimes Q_2 & 0 \cr 0 & 0 & 0 \cr\end{pmatrix}
\qquad \hbox{ for all } 1 \le i \le m,\ 1 \le j \le n.
\end{equation*}
The result will then follow. We carry out the above scheme with an inductive argument, and divide the proofs into several lemmas.

Note that in this section only the linearity and the disjointness structure of the rectangular matrices are concerned. As will be shown below, the (real or complex) matrix space $\bM_2 = \operatorname{span}\{E_{11}, E_{12}, E_{21}, E_{22}\}$
and  the matrix space $\operatorname{span}\{E_{ij}, E_{ik}, E_{lj}, E_{lk}\}$
can be considered as the  same object during our discussion.

\begin{lemma}\label{lem:M2all}
Let $i\neq l$ and $j\neq k$.
The bijective linear map $\Psi: \bM_2\to \operatorname{span}\{E_{ij}, E_{ik}, E_{lj}, E_{lk}\}$,
sending $E_{11}, E_{12}, E_{21}, E_{22}$ to $E_{ij}, E_{ik}, E_{lj}, E_{lk}\in\bM_{m,n}$ respectively, preserves the disjointness in two directions, i.e.,
$$
A\,\bot\, B \quad\Longleftrightarrow\quad \Psi(A)\,\bot\, \Psi(B)\qquad \hbox{ for all }  A, B\in \bM_2.
$$
\end{lemma}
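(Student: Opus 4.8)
The plan is to observe that disjointness of two matrices supported on the index pattern $\{i,l\}\times\{j,k\}$ is detected entirely by their $2\times 2$ ``coordinate blocks'', so that the lemma becomes essentially a tautology once the bookkeeping is carried out. Concretely, for $X\in\bM_{m,n}$ whose only possibly nonzero entries lie in rows $i,l$ and columns $j,k$ --- i.e.\ $X\in\operatorname{span}\{E_{ij},E_{ik},E_{lj},E_{lk}\}$ --- let $\widehat X\in\bM_2$ denote the submatrix of $X$ formed by rows $i,l$ and columns $j,k$ (in that order). By construction $\widehat{\Psi(A)}=A$ for every $A\in\bM_2$, and $X\mapsto\widehat X$ is the two-sided inverse of $\Psi$ between these two four-dimensional spaces; in particular $\widehat{(\cdot)}$ is a linear bijection.

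First I would record two entrywise identities, valid for all $X,Y\in\operatorname{span}\{E_{ij},E_{ik},E_{lj},E_{lk}\}$. For $X^*Y\in\bM_n$, the $(p,q)$ entry equals $\sum_{t}\overline{X_{tp}}\,Y_{tq}$; since $X_{tp}=0$ unless $(t,p)\in\{i,l\}\times\{j,k\}$, and likewise for $Y$, this entry vanishes unless $p,q\in\{j,k\}$, and on the $\{j,k\}\times\{j,k\}$ block it agrees with the corresponding entry of $\widehat X^*\widehat Y$. Hence $X^*Y=0_n$ if and only if $\widehat X^*\widehat Y=0_2$. The same computation applied to $XY^*\in\bM_m$, whose $(p,q)$ entry is $\sum_t X_{pt}\overline{Y_{qt}}$, shows it is supported on the $\{i,l\}\times\{i,l\}$ block, where it agrees with $\widehat X\widehat Y^*$; hence $XY^*=0_m$ if and only if $\widehat X\widehat Y^*=0_2$. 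Conjugation plays no special role here, so the argument is identical over $\IR$ and $\IC$.

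Combining these two facts finishes the proof: for any $A,B\in\bM_2$, set $X=\Psi(A)$ and $Y=\Psi(B)$, so that $\widehat X=A$ and $\widehat Y=B$, and then
$$\Psi(A)\perp\Psi(B)\iff X^*Y=0_n\text{ and }XY^*=0_m\iff A^*B=0_2\text{ and }AB^*=0_2\iff A\perp B,$$
which is exactly the two-directional disjointness preservation claimed. There is no genuine obstacle in this argument; the only thing requiring care is keeping the roles of the row indices $i,l$ and the column indices $j,k$ --- and of the adjoints --- straight, since swapping rows with columns in the definition of $\Psi$ would instead realize the composition of $\Psi$ with transposition on $\bM_2$.
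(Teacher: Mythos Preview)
Your proof is correct. The paper takes a slightly slicker route: it observes that $\Psi(A)=UAV$ where $U=E_{i1}+E_{l2}\in\bM_{m,2}$ and $V=E_{1j}+E_{2k}\in\bM_{2,n}$ are partial isometries with $U^*U=VV^*=I_2$, from which both directions follow immediately since $\Psi(A)^*\Psi(B)=V^*(A^*B)V$ and $\Psi(A)\Psi(B)^*=U(AB^*)U^*$. Your entrywise bookkeeping is doing exactly the same thing unpacked at the level of matrix entries, so the two arguments are really the same idea; the paper's factorization just compresses the verification into one line and makes the two-sided invertibility of the correspondence (via $U^*$ on the left and $V^*$ on the right) transparent without tracking individual indices.
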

\begin{proof}
The assertion follows from the fact that $\Psi(A)=UAV$, where $U=E_{i1}+E_{l2}\in\bM_{m,2}$ and $V=E_{1j}+E_{2k}\in\bM_{2,n}$ are partial isometries
such that $U^*U=VV^*=I_2$, the $2\times 2$ identity matrix.
\end{proof}

The technical lemma  will be
used heavily in the subsequent proofs.  Although the statement is stated and proved
for the case when the domain is $\bM_2$, it is indeed valid for all the rectangular matrix space
$\operatorname{span}\{E_{ij}, E_{ik}, E_{lj}, E_{lk}\}$ due to Lemma \ref{lem:M2all}.
In the future application,
the lemma ensures that if $\Phi(E_{ij})$ and
$\Phi(E_{lk})$ have some nice structure for a disjointness preserving linear map
$\Phi: \bM_{m,n} \rightarrow \bM_{r,s}$,
then  much can be said about $\Phi(E_{ik}+E_{lj})$ and
$\Phi(E_{ik}-E_{lj})$. One can then compose $\Phi$ with some
unitaries so that all $\Phi(E_{ij}), \Phi(E_{ik}), \Phi(E_{lj})$ and $\Phi(E_{lk})$ have simple structure.

\begin{lemma} \label{lem1}
Let $\Phi: \bM_{2} \rightarrow \bM_{r,s}$ be a nonzero linear map
preserving disjointness such that
$$\Phi(E_{11}) = \begin{pmatrix} D_1  & 0 & 0  \cr 0 & 0_\ell  & 0 \cr
0 & 0 & 0_{r-k-\ell, s-k-\ell} \end{pmatrix} \quad \hbox{ and } \quad
\Phi(E_{22}) =
\begin{pmatrix} 0_k  & 0 & 0  \cr 0 & D_2  & 0 \cr
0 & 0 & 0_{r-k-\ell, s-k-\ell} \end{pmatrix},$$
where $D_1 \in \bM_k, D_2 \in \bM_\ell$ are diagonal matrices with positive
diagonal entries arranged in descending order, and
$D_1  = \alpha_1 I_{u_1} \oplus \cdots \oplus \alpha_v I_{u_v}$
with $\alpha_1 > \cdots > \alpha_v > 0$ and $u_1 + \cdots + u_v = k$.
\begin{enumerate}[(a)]
\item We have $D_1 = D_2$.
Moreover,
$$
\Phi(E_{12}+E_{21}) = \begin{pmatrix} 0_k  & B_{12} & 0  \cr
B_{12}^* & 0_k  & 0 \cr
0 & 0 & 0_{r-2k,s-2k} \end{pmatrix} \ \hbox{ and } \
\Phi(E_{12}-E_{21}) = \begin{pmatrix} 0_k  & {\widehat C_{12}} & 0  \cr
-{\widehat C_{12}}^* & 0_k  & 0 \cr
0 & 0 & 0_{r-2k,s-2k} \end{pmatrix},
$$
where $B_{12} = \alpha_1 W_1 \oplus \cdots \oplus \alpha_v W_v$
and
${\widehat C_{12}} = \alpha_1 W_1 V_1 \oplus \cdots
\oplus \alpha_v W_v V_v$ with $W_j, V_j \in \bU_{u_j}$.

\item There are unitaries $R_1, R_2\in \bU_k$ and a permutation $P\in \bM_k$ such that the map
$$X \mapsto (P^*R_2^*R_1^* \oplus P^*R_2^*
\oplus I_{r-2k})\Phi(X)(R_1R_2P \oplus R_2P  \oplus I_{s-2k})$$
satisfies
$$
E_{11} \mapsto  \begin{pmatrix}
Q_1\oplus Q_2 & 0_k & 0 \cr
0_k & 0_k & 0 \cr
0 & 0 & 0\cr\end{pmatrix},
 \quad
E_{12} \mapsto \begin{pmatrix}
0_k & Q_1 \oplus 0_{k_2} & 0 \cr
0_{k_1}\oplus Q_2  & 0_k & 0 \cr
0 & 0 & 0\cr\end{pmatrix},
$$
$$
E_{21} \mapsto \begin{pmatrix}
0_k & 0_{k_1} \oplus Q_2 & 0 \cr
Q_1 \oplus 0_{k_2} & 0_k & 0 \cr
0 & 0 & 0\cr\end{pmatrix}, \quad
E_{22} \mapsto  \begin{pmatrix}
0_k & 0_k  & 0 \cr
0_k & Q_1 \oplus Q_2 & 0 \cr
0 & 0 & 0\cr\end{pmatrix},
$$
where $Q_1\in \bM_{k_1}$, $Q_2\in \bM_{k_2}$, $k_1+k_2 = k$,
are diagonal matrices with positive diagonal entries from $\{\alpha_1, \ldots, \alpha_v\}$  arranged in
descending order.
\end{enumerate}
\end{lemma}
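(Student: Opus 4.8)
The plan is to exploit the disjointness geometry of $\bM_2$. For unit vectors $u,v\in\IR^2$ (real coordinates already suffice), the rank-one partial isometries $uv^*$ and $u_\perp v_\perp^*$ --- where $u_\perp,v_\perp$ are $u,v$ rotated through $\pi/2$ --- are disjoint, and their sum $uv^*+u_\perp v_\perp^*=\cos\gamma\,I_2-\sin\gamma(E_{12}-E_{21})=:R_\gamma$ depends only on the angle $\gamma$ between $u$ and $v$. Writing $uv^*=\tfrac12\bigl(R_\gamma+\cos\psi\,(E_{11}-E_{22})+\sin\psi\,(E_{12}+E_{21})\bigr)$ with $\psi$ a free angle, and abbreviating $H:=\Phi(E_{11}-E_{22})$, $G:=\Phi(E_{12}+E_{21})$, $\widehat G:=\Phi(E_{12}-E_{21})$, $Y:=\Phi(E_{11})+\Phi(E_{22})$ (so that $\Phi(R_\gamma)=\cos\gamma\,Y-\sin\gamma\,\widehat G$), the disjointness-preserving hypothesis applied to $uv^*\perp(R_\gamma-uv^*)$ gives, for all $\gamma,\psi$, with $M(\psi):=\cos\psi\,H+\sin\psi\,G$,
$$\bigl(\Phi(R_\gamma)+M(\psi)\bigr)^*\bigl(\Phi(R_\gamma)-M(\psi)\bigr)=0\quad\text{and}\quad\bigl(\Phi(R_\gamma)+M(\psi)\bigr)\bigl(\Phi(R_\gamma)-M(\psi)\bigr)^*=0.$$
I would expand these and read off the coefficients of $1,\cos\psi,\sin\psi,\cos2\psi,\sin2\psi$, and then of $1,\cos2\gamma,\sin2\gamma$.

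This Fourier extraction should produce $Y^*Y=H^*H=G^*G=\widehat G^*\widehat G$ together with the adjoint identities $YY^*=HH^*=GG^*=\widehat G\widehat G^*$; the ``$\sin2\psi$'' relations $H^*G+G^*H=0$, $HG^*+GH^*=0$; the ``$\cos\psi,\sin\psi$'' relations $Y^*G=G^*Y$, $YG^*=GY^*$; and, from varying $\gamma$, $Y^*\widehat G+\widehat G^*Y=0$, $Y\widehat G^*+\widehat G Y^*=0$ and $H^*\widehat G=\widehat G^*H$. Since $E_{11}\perp E_{22}$ kills the cross terms, $Y^*Y=\Phi(E_{11})^*\Phi(E_{11})+\Phi(E_{22})^*\Phi(E_{22})$ is supported on the leading $k+\ell$ coordinates; hence by $GG^*=YY^*$, $G^*G=Y^*Y$ the matrix $G$ lives in the leading $(k+\ell)\times(k+\ell)$ block. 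Writing it there in $2\times2$ block form with blocks of sizes $k,\ell$, the combinations $X_{11}^*G=G^*X_{22}$, $X_{22}^*G=G^*X_{11}$ (obtained by adding/subtracting $Y^*G=G^*Y$ and $H^*G+G^*H=0$, where $X_{ii}:=\Phi(E_{ii})$) force the diagonal blocks of $G$ to vanish and give $D_1G_{12}=G_{21}^*D_2$; together with $G^*G=GG^*=D_1^2\oplus D_2^2$ this forces $k=\ell$ and, since $D_1,D_2$ share the same eigenvalues listed in descending order, $D_1=D_2=:D$. A polar decomposition then gives $G_{12}=DW_0$ with $W_0$ unitary commuting with $D$, hence block-scalar $W_0=W_1\oplus\cdots\oplus W_v$ along the eigenspaces of $D$, so $B_{12}:=G_{12}=\bigoplus_j\alpha_jW_j$ and $G_{21}=G_{12}^*$. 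Repeating verbatim for $\widehat G$, with the single sign change $Y^*\widehat G=-\widehat G^*Y$, gives $\widehat G=\begin{pmatrix}0 & \widehat C_{12}\cr -\widehat C_{12}^* & 0\end{pmatrix}$ with $\widehat C_{12}=\bigoplus_j\alpha_jW_j'$; setting $V_j:=W_j^{-1}W_j'$ (a unitary) yields $\widehat C_{12}=\bigoplus_j\alpha_jW_jV_j$, which is (a).

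For (b), recover $\Phi(E_{12})=\tfrac12(G+\widehat G)$ and $\Phi(E_{21})=\tfrac12(G-\widehat G)$ in explicit block form, then use $E_{12}\perp E_{21}$: $\Phi(E_{12})^*\Phi(E_{21})=0=\Phi(E_{12})\Phi(E_{21})^*$. Carrying out the block products, the $(1,1)$- and $(2,2)$-blocks are, on the $j$-th eigenspace of $D$, scalar multiples of $V_j-V_j^*$; hence each $V_j$ is a self-adjoint unitary. Diagonalizing $V_j=Z_j\bigl(I_{k_1^{(j)}}\oplus(-I_{k_2^{(j)}})\bigr)Z_j^*$ with $Z_j\in\bU_{u_j}$, set $R_1:=\bigoplus_jW_j$, $R_2:=\bigoplus_jZ_j\in\bU_k$ (both commuting with $D$), and let $P\in\bM_k$ be the permutation gathering first all ``$+1$''-eigendirections of the $V_j$'s in decreasing order of $\alpha_j$ (producing $Q_1:=\bigoplus_j\alpha_jI_{k_1^{(j)}}$) and then all ``$-1$''-eigendirections (producing $Q_2:=\bigoplus_j\alpha_jI_{k_2^{(j)}}$). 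After the conjugation of the statement, $Z_j$ turns $\tfrac12(I+V_j)$ into the coordinate projection $I_{k_1^{(j)}}\oplus0$ and $\tfrac12(I-V_j)$ into $0\oplus I_{k_2^{(j)}}$, while $R_1$ cancels the $W_j$'s inside $B_{12},\widehat C_{12}$; a direct check then yields the four stated images of $E_{11},E_{12},E_{21},E_{22}$.

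The one genuinely non-routine move is recognizing the disjoint ``rotation family'' $\{uv^*,u_\perp v_\perp^*\}$ with one-parameter sum and harvesting the needed identities through Fourier coefficients in the two angles; thereafter it is polar decompositions, the intertwining relations, and $3\times3$ block/permutation bookkeeping. The points needing care are the block and permutation bookkeeping in (b), and the degenerate configurations --- for instance $D_1$ or $D_2$ vacuous --- which the hypothesis $\Phi\neq0$ actually excludes (if $D_1$ were vacuous then $Y^*G=0$ while $G^*G=Y^*Y\neq0$, forcing $G=0$ and, similarly, $\widehat G=0$, hence $\Phi|_{\bM_2}=0$).
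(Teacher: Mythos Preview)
Your argument is correct and shares the paper's overall strategy---exploit one-parameter families of disjoint rank-one pairs in $\bM_2$, read off block identities, then finish with $E_{12}\perp E_{21}$---but your execution is genuinely cleaner. The paper uses the ad hoc rational families
\[
Z_1(\gamma)=\begin{pmatrix}\gamma&1\\1&1/\gamma\end{pmatrix}\perp\begin{pmatrix}1/\gamma&-1\\-1&\gamma\end{pmatrix}=Z_2(\gamma),\qquad
Z_3(\gamma)=\begin{pmatrix}\gamma&-1\\1&-1/\gamma\end{pmatrix}\perp\begin{pmatrix}1/\gamma&1\\-1&-\gamma\end{pmatrix}=Z_4(\gamma),
\]
computes the $(i,j)$-blocks of $\Phi(Z_a)^*\Phi(Z_b)$ and $\Phi(Z_a)\Phi(Z_b)^*$ term by term, and extracts the relations $B_{12}B_{12}^*=B_{21}^*B_{21}=D_1^2$, $D_1B_{12}=B_{21}^*D_2$, etc.\ by comparing coefficients of $\gamma$ and $1/\gamma$. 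Your trigonometric parameterization $uv^*=\tfrac12(R_\gamma+M(\psi))$ separates the two degrees of freedom cleanly and packages the same information as the single identities $Y^*Y=H^*H=G^*G=\widehat G^*\widehat G$, $Y^*G=G^*Y$, $H^*G+G^*H=0$, $Y^*\widehat G+\widehat G^*Y=0$, $H^*\widehat G=\widehat G^*H$, from which the block conclusions follow without the block-by-block bookkeeping.

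For part (b) the two arguments coincide: both deduce from $\Phi(E_{12})\perp\Phi(E_{21})$ that the unitary $V_0=\bigoplus_jV_j$ is self-adjoint, diagonalize it block-wise, and permute. The paper does this after first conjugating by $R_1=\bigoplus_jW_j$ to reduce $B_{12}$ to $D_1$; you keep $R_1$ explicit and diagonalize directly---the resulting $R_1,R_2,P$ match the paper's. Your remark on the degenerate case (one of $D_1,D_2$ vacuous forcing $\Phi=0$) is a nice addition the paper leaves implicit.
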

\begin{proof}
(a)\ Suppose $\Phi: \bM_{2} \rightarrow \bM_{r,s}$ satisfies the assumption. Let
$$\Phi(E_{12}+E_{21})
=\begin{pmatrix} B_{11} & B_{12} & B_{13} \cr B_{21} & B_{22} & B_{23} \cr
B_{31} &  B_{32} & B_{33} \cr\end{pmatrix},$$
where $B_{11} \in \bM_k, B_{22} \in \bM_\ell$.
For every nonzero $\gamma\in \mathbb{R}$,
the pair of the matrices
$$Z_1
= \begin{pmatrix} \gamma & 1 \cr 1 & \frac{1}{\gamma} \cr\end{pmatrix} \quad
\hbox{ and } \quad
Z_2 = \begin{pmatrix} \frac{1}{\gamma} & -1 \cr -1 & \gamma \cr\end{pmatrix}$$
are  disjoint, and so are the pair $T_1 = \Phi(Z_1)$ and $T_2 = \Phi(Z_2)$.
Considering the $(1,1)$, $(1,2)$, $(2,1)$,
$(2,2)$, $(3,3)$ blocks of the matrix $T_1^*T_2$,
we get the following:
\begin{eqnarray*}
0_k &=&  D_1^2+\frac{1}{\gamma}B_{11}^*D_1-\gamma D_1B_{11}-B_{11}^*B_{11}-
B_{21}^*B_{21} - B_{31}^* B_{31}, \cr
0_{k,\ell} &=& \gamma(B_{21}^*D_2-D_1B_{12})-B_{11}^*B_{12}-B_{21}^*B_{22} -
B_{31}^* B_{32}, \cr
0_{\ell,k} &=&  \frac{1}{\gamma}(B_{12}^*D_1-D_2B_{21})-B_{12}^*B_{11}-B_{22}^*B_{21}
- B_{32}^*B_{31}, \cr
0_\ell  &=& D_2^2-\frac{1}{\gamma}D_2B_{22}+\gamma B_{22}^*D_2-B_{22}^*B_{22}-
B_{12}^*B_{12} - B_{32}^*B_{32}, \cr
0_{s-k-\ell} & = & -B_{13}^*B_{13}-B_{23}^* B_{23} - B_{33}^*B_{33}.
\end{eqnarray*}
Considering the  $(1,1)$, $(1,2)$, $(2,1)$,
$(2,2)$, $(3,3)$ blocks of the matrix $T_1T_2^*$,
we get the following:
\begin{eqnarray*}
0_k &=&  D_1^2+\frac{1}{\gamma}B_{11}D_1-\gamma D_1B_{11}^*-B_{11}B_{11}^*
  -B_{12}B_{12}^* - B_{13}B_{13}^*, \cr
0_{k,\ell} &=& \gamma(B_{12}D_2-D_1B_{21}^*)-B_{11}B_{21}^*-B_{12}B_{22}^*-
    B_{13}B_{23}^*, \cr
0_{\ell,k} &=&\frac{1}{\gamma}(B_{21}D_1-D_2B_{12}^*)-B_{21}B_{11}^*-B_{22}B_{12}^*
- B_{23}B_{13}^*, \cr
0_\ell &=&
  D_2^2-\frac{1}{\gamma}D_2B_{22}^*+\gamma B_{22}D_2-B_{21}B_{21}^*-B_{22}B_{22}^*
- B_{23}B_{23}^*, \cr
0_{r-k-\ell} &=& -B_{31}B_{31}^* - B_{32}B_{32}^* - B_{33}B_{33}^*.
\end{eqnarray*}
In view of the $(3,3)$ blocks of $T_1^*T_2$ and $T_1T_2^*$ being zero blocks,
we see that $B_{13}, B_{23}, B_{33}, B_{31}, B_{32}$ are zero blocks.
Since $0\neq\gamma$ is arbitrary and $D_1$, $D_2$ are invertible, we see
that
\begin{gather}
B_{11}= 0_k,\quad  B_{22}=0_\ell, \notag \\
B_{12}B_{12}^*=B_{21}^*B_{21}=D_1^2 \in \bM_k, \quad
B_{12}^*B_{12}=B_{21}B_{21}^*=D_2^2 \in \bM_\ell,\label{eq1} \\
D_1B_{12}=B_{21}^*D_2, \quad \hbox{ and } \quad B_{12}D_2=D_1B_{21}^*. \label{eq2}
\end{gather}
Note that $B_{12}B_{12}^*$ and $B_{12}^*B_{12}$ have the same nonzero eigenvalues
(counting multiplicities). Because  $D_1, D_2$ have positive
diagonal entries arranged in descending order, it follows from (\ref{eq1}) that
$k =  \ell$ and  $D_1=D_2$.

We can now assume that
$D_1  =  D_2 = \alpha_1 I_{u_1} \oplus \cdots \oplus \alpha_v I_{u_v}$
with $\alpha_1 > \cdots > \alpha_v > 0$ and $u_1 + \cdots + u_v = k$.
Furthermore, from (\ref{eq1})
the matrices $B_{12}, B_{12}^*, B_{21}$ and $B_{21}^*$ have orthogonal
columns with Euclidean norms equal to the diagonal entries of $D_1$.
By (\ref{eq2}),  we see that
$$
B_{12} = B_{21}^* = \alpha_1 W_1 \oplus \cdots \oplus \alpha_v W_v
$$
for some $W_1 \in \bU_{u_1}, \dots, W_v \in \bU_{u_v}$.

Let $R_1 = W_1 \oplus \cdots \oplus W_v$.
Replace
$\Phi$ by
$X \mapsto (R_1^* \oplus I_{r-k})\Phi(X)(R_1 \oplus I_{s-k})$. We may assume that $B_{12} = B_{21}^* = D_1$.
Let  $$\Phi(E_{12}-E_{21})
=\begin{pmatrix} C_{11} & C_{12} & C_{13} \cr C_{21} & C_{22} & C_{23} \cr
C_{31} & C_{32} & C_{33} \cr\end{pmatrix},$$
where $C_{11} \in \bM_k, C_{22} \in \bM_\ell$.

Now, the pair of matrices
$$
Z_3 = \begin{pmatrix} \gamma & -1 \cr 1 & -\frac{1}{\gamma} \cr\end{pmatrix}
\quad \text{ and } \quad
Z_4 = \begin{pmatrix} \frac{1}{\gamma} & 1 \cr -1 & -\gamma \cr\end{pmatrix}
$$
are disjoint, and so are the pair of matrices
$T_3=\Phi(Z_3)$ and $T_4 = \Phi(Z_4)$.
Consider the $(1,1)$, $(1,2)$, $(2,1)$,
$(2,2)$, $(3,3)$ blocks of the matrix $T_3^*T_4$.
By the fact that $k = \ell$ and $D_1 = D_2$, we get the following:
\begin{eqnarray*}
0_k &=&  D_1^2-\frac{1}{\gamma}C_{11}^*D_1+\gamma D_1C_{11}-C_{11}^*C_{11}
-C_{21}^*C_{21} - C_{31}^*C_{31}, \cr
0_k &=&
\gamma(D_1C_{12}+C_{21}^*D_2)-C_{11}^*C_{12}-C_{21}^*C_{22} - C_{31}^*C_{32}, \cr
0_{k} &=&
-\frac{1}{\gamma}(C_{12}^*D_1+D_2C_{21})-C_{12}^*C_{11}-C_{22}^*C_{21}
- C_{32}^*C_{31}, \cr
0_k &=& D_2^2-\frac{1}{\gamma}D_2C_{22}+\gamma C_{22}^*D_2-C_{22}^*C_{22}-
C_{12}^*C_{12} - C_{32}^*C_{32},\cr
0_{s-2k}&=& -C_{13}^*C_{13}- C_{23}^*C_{23} - C_{33}^*C_{33}.
\end{eqnarray*}
Consider the $(1,1)$, $(1,2)$, $(2,1)$,
$(2,2)$, $(3,3)$ blocks of the matrix $T_3T_4^*$.
We get the following:
\begin{eqnarray*}
0_k &=&  D_1^2-\frac{1}{\gamma}C_{11}D_1+\gamma D_1C_{11}^*-C_{11}C_{11}^*
-C_{12}C_{12}^* - C_{13}C_{13}^*,\cr
0_k &=& \gamma(D_1C_{21}^*+C_{12}D_2)-C_{11}C_{21}^*-C_{12}C_{22}^*
- C_{13} C_{23}^*,\cr
0_k &=&
 -\frac{1}{\gamma}(C_{21}D_1+D_2C_{12}^*)-C_{21}C_{11}^*-C_{22}C_{12}^*
 - C_{23}C_{13}^*,\cr
 0_k  &=& D_2^2-\frac{1}{\gamma}D_2C_{22}^*
 +\gamma C_{22}D_2-C_{21}C_{21}^*-C_{22}C_{22}^* - C_{23} C_{23}^*, \cr
 0_{r-2k} &=& -C_{31}C_{31}^* - C_{32} C_{32}^* - C_{33} C_{33}^*.
 \end{eqnarray*}
By a similar argument  for the pair $(T_1, T_2)$, we conclude that $C_{11}$, $C_{22}$, $C_{13}, C_{23}, C_{33}, C_{31}, C_{32}$ are zero blocks. Furthermore,

\medskip\centerline{
$C_{21}^*C_{21}=C_{12}C_{12}^* = C_{21}C_{21}^*=C_{12}^*C_{12}=D_1^2$, \
$D_1C_{12}=-C_{21}^*D_1$, \ and \ $C_{12}D_1=-D_1C_{21}^*$. }

\medskip\noindent
Now, $C_{21}, C_{12}^*, C_{21}^*, C_{12}$ have orthogonal columns with Euclidean norms  equal to the diagonal entries of $D_1$, and together with the fact that
$D_1C_{12}=-C_{21}^*D_1$,  and  $C_{12}D_1=-D_1C_{21}^*$, we see that
$$C_{12} = - C_{21}^* = \alpha_1 V_1 \oplus \cdots  \oplus \alpha_vV_v
\in \bM_{u_1} \oplus \cdots \oplus \bM_{u_v},$$
where $V = D_1^{-1}C_{12} = V_1 \oplus \cdots \oplus V_v$ is unitary.
Thus in its original form, we see that
$$
{\widehat C_{12}} = -{\widehat C_{21}^*}  = \alpha_1 W_1 V_1 \oplus \cdots
\oplus \alpha_v W_v V_v.
$$

(b)
Continue the arguments in (a), and in particular assume that $B_{12}=B_{21}^*=D_1$ and $C_{12} = - C_{21}^* = \alpha_1 V_1 \oplus \cdots  \oplus \alpha_vV_v= D_1V$.
There is a unitary matrix
$R_2 = U_1 \oplus \cdots \oplus U_v \in \bU_k$ with
$U_1 \in \bM_{u_1}, \dots, U_v \in \bM_{u_v}$ such that
$R_2^*VR_2 = \diag(g_1, \dots, g_k) = G \in \bU_k$.
Now, we may replace $\Phi$ by the map
$X \mapsto (R_2^*\oplus R_2^* \oplus I_{r-2k})\Phi(X) (R_2\oplus R_2 \oplus I_{s-2k})$
and assume that $C_{12} = -C_{21}^* = D_1G$.
In particular,
\begin{align*}
\Phi(E_{12}+E_{21}) = \begin{pmatrix} 0_k  & D_{1} & 0  \cr
D_1 & 0_k  & 0 \cr
0 & 0 & 0_{r-2k,s-2k} \end{pmatrix} \ \hbox{ and } \
\Phi(E_{12}-E_{21}) = \begin{pmatrix} 0_k  & {D_1 G} & 0  \cr
- D_1 G^* & 0_k  & 0 \cr
0 & 0 & 0_{r-2k,s-2k} \end{pmatrix}.
\end{align*}

We claim that $G$ is permutationally similar to
$I_{k_1} \oplus -I_{k_2}$ with $k_1 + k_2 = k$.
To see this, consider the pair
$$\Phi(E_{12})=\begin{pmatrix} 0_k & \frac{D_1(I_k+G)}{2} & 0 \cr
\frac{D_1(I_k-G^*)}{2} & 0_k & 0 \cr
0 & 0 & 0 \end{pmatrix},  \quad
\Phi(E_{21})=\begin{pmatrix} 0_k & \frac{D_1(I_k-G)}{2} & 0 \cr
\frac{D_1(I_k+G^*)}{2} & 0_k & 0 \cr
0 & 0 & 0 \cr \end{pmatrix}.$$
One readily checks that the pair are disjoint if and only if
$(I_k+G)(I_k-G^*) = 0_k$, equivalently,
$G$ is a real diagonal unitary matrix.
Thus, there is a permutation matrix $P \in \bM_k$
such that  $P^tGP
= I_{k_1} \oplus -I_{k_2}$ with $k_1 + k_2 = k$.
With a further permutation, we can assume $P^tD_1GP=Q_1 \oplus -Q_2 \in \bM_k$ so that $Q_1, Q_2$ are  diagonal
matrices with descending positive diagonal entries.

We may replace $\Phi$ by a map
$$X \mapsto (P^t\oplus P^t \oplus I_{r-2k})\Phi(X)(P\oplus P \oplus I_{s-2k})$$
so that
$$
\Phi(E_{11}) = \begin{pmatrix}
Q_1\oplus Q_2 & 0_k & 0 \cr
0_k & 0_k & 0 \cr
0 & 0 & 0\cr\end{pmatrix}, \quad
\Phi(E_{12}+E_{21}) = \begin{pmatrix}
0_k & Q_1 \oplus Q_2 & 0 \cr
Q_1 \oplus Q_2 & 0_k & 0 \cr
0 & 0 & 0\cr\end{pmatrix},$$
$$\Phi(E_{22}) = \begin{pmatrix}
0_k & 0_k  & 0 \cr
0_k & Q_1 \oplus Q_2 & 0 \cr
0 & 0 & 0\cr\end{pmatrix}, \quad
\Phi(E_{12}-E_{21}) = \begin{pmatrix}
0_k & Q_{1} \oplus -Q_2 & 0 \cr
-Q_1 \oplus Q_2 & 0_k & 0 \cr
0 & 0 & 0\cr\end{pmatrix}.
$$
Adding and subtracting the matrices
$\Phi(E_{12}+E_{21})$ and $\Phi(E_{12}-E_{21})$,
we get the desired forms of $\Phi(E_{12})$ and $\Phi(E_{21})$.
The result follows.
\end{proof}

\begin{lemma} \label{lem2}
Theorem \ref{thm1} holds if $m = n \ge 2$.
\end{lemma}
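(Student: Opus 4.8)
The plan is to induct on $n$, with Lemma \ref{lem1} doing the combinatorial work. \emph{Base case $n=2$.} We may assume $\Phi\ne 0$. Since $E_{11}\perp E_{22}$, the matrices $\Phi(E_{11})$ and $\Phi(E_{22})$ are disjoint, hence have orthogonal ranges and orthogonal initial spaces; composing $\Phi$ with suitable $U\in\bU_r$, $V\in\bU_s$ --- first putting these two images in disjoint row and column blocks, then diagonalising each by a singular value decomposition inside its own block --- we may assume $\Phi(E_{11}),\Phi(E_{22})$ are exactly in the normal form required by Lemma \ref{lem1}. (Should $\Phi(E_{11})=0$, first swap rows and columns $1,2$ of the domain; by Lemma \ref{lem1}(a) the nonzero map $\Phi$ cannot vanish at both $E_{11}$ and $E_{22}$.) Lemma \ref{lem1}(b) then supplies, after one more composition with unitaries, diagonal matrices $Q_1\in\bM_{k_1}$, $Q_2\in\bM_{k_2}$ with positive entries and the asserted normal forms for $\Phi(E_{11}),\Phi(E_{12}),\Phi(E_{21}),\Phi(E_{22})$; up to a fixed permutation of rows and columns these equal $E_{ij}\otimes Q_1\oplus E_{ij}^t\otimes Q_2\oplus 0$. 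As $E_{11},E_{12},E_{21},E_{22}$ span $\bM_2$, the map $\Phi$ vanishes on the remaining rows and columns, forcing $r,s\ge 2(k_1+k_2)$ and giving (\ref{s-form}).

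\emph{Inductive step.} Assume Theorem \ref{thm1} holds for $\bM_{n-1}$ and let $\Phi:\bM_n\to\bM_{r,s}$ be a nonzero disjointness preserver. Relabelling rows and columns of the domain (replacing $\Phi$ by $X\mapsto\Phi(PXQ)$ with permutation matrices $P,Q$), we may assume $\Phi$ does not vanish on $\mathcal N=\operatorname{span}\{E_{ij}:1\le i,j\le n-1\}\cong\bM_{n-1}$, on which $\Phi$ still preserves disjointness. By the inductive hypothesis, after composing $\Phi$ with unitaries we may assume $\Phi(E_{ij})=(E_{ij}\otimes Q_1)\oplus(E_{ij}^t\otimes Q_2)\oplus 0$ for all $i,j\le n-1$, with diagonal $Q_1\in\bM_{k_1}$, $Q_2\in\bM_{k_2}$ having positive entries ($k:=k_1+k_2$, one block possibly vacuous) and with $(n-1)\times(n-1)$ tensor factors. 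It remains to determine $\Phi(E_{nn})$ and $\Phi(E_{in}),\Phi(E_{ni})$ for $i\le n-1$. Since $E_{nn}$ is disjoint from every $E_{ij}$ with $i,j\le n-1$, while the ranges (resp.\ initial spaces) of the already fixed images fill the first $(n-1)k$ rows (resp.\ columns) now in use, $\Phi(E_{nn})$ is supported in the complementary rows and columns; a singular value decomposition there, carried out by unitaries acting only on those rows and columns and hence leaving $\Phi|_{\mathcal N}$ untouched, makes $\Phi(E_{nn})$ diagonal.

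Now apply Lemma \ref{lem1}, transported by Lemma \ref{lem:M2all}, to the copy of $\bM_2$ spanned by $\{E_{11},E_{1n},E_{n1},E_{nn}\}$. Part (a) forces the singular values of $\Phi(E_{nn})$ to coincide with those of $\Phi(E_{11})$ --- in particular $\Phi(E_{nn})\ne 0$ --- and confines $\Phi(E_{1n}),\Phi(E_{n1})$ to the cross blocks relating index $1$ to index $n$. The normalising unitaries of part (b) may be chosen to act simultaneously on the row block and the column block carrying index $n$, conjugating $\Phi(E_{nn})$ (which therefore stays in standard form) while straightening the off-diagonal blocks of $\Phi(E_{1n}),\Phi(E_{n1})$; they leave the index-$1$ data and all of $\mathcal N$ intact. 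Thus $\Phi(E_{nn}),\Phi(E_{1n}),\Phi(E_{n1})$ attain the standard forms prescribed by (\ref{s-form}), with the \emph{same} $Q_1,Q_2$. Finally, for $2\le i\le n-1$ the image $\Phi(E_{in})$ is disjoint from $\Phi(E_{aa})$ for $a\le n-1$, $a\ne i$, and from $\Phi(E_{1i})$ and $\Phi(E_{n1})$ (because $E_{in}$ is disjoint from each of $E_{aa}$, $E_{1i}$, $E_{n1}$); one checks these relations already confine the range and the initial space of $\Phi(E_{in})$ to exactly the coordinate subspaces demanded by (\ref{s-form}), after which Lemma \ref{lem1}(a) applied to $\{E_{ii},E_{in},E_{ni},E_{nn}\}$ pins down its entries with no further normalisation; the symmetric argument handles $\Phi(E_{ni})$. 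Hence $\Phi$ agrees with $A\mapsto U[(A\otimes Q_1)\oplus(A^t\otimes Q_2)\oplus 0]V$ on the standard basis of $\bM_n$, and by linearity (\ref{s-form}) follows.

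The crux is the two rigidity assertions in the last paragraph: that the part-(b) normalisation for $\{E_{11},E_{1n},E_{n1},E_{nn}\}$ can be localised to the index-$n$ rows and columns without disturbing $\Phi|_{\mathcal N}$, and that the remaining images $\Phi(E_{in}),\Phi(E_{ni})$ with $2\le i\le n-1$ are forced into standard form by the disjointness relations alone. Both come down to tracking how the eigenvalue multiplicities of $D:=Q_1\oplus Q_2$ --- the integers $u_1,\dots,u_v$ and the way they are split between $Q_1$ and $Q_2$ --- interact with the relevant block positions, invoking the identities of Lemma \ref{lem1} repeatedly; this bookkeeping is where most of the computation resides.
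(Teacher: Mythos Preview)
Your inductive skeleton matches the paper's, and the base case is fine.  The gap is in the two ``rigidity assertions'' you flag as the crux.  First, Lemma~\ref{lem1}(b) cannot be localised to the index-$n$ block as you claim: its unitaries $R_1R_2P$ act on the index-$1$ block as well, and while they commute with $D$ (hence fix $\Phi(E_{11})$), they need not commute with $\hat D=P^t(Q_1\oplus -Q_2)P$ when $Q_1$ and $Q_2$ share an eigenvalue, so they can disturb $\Phi(E_{1j}-E_{j1})$ for $j<n$.  One can normalise $B_{1n}$ alone by conjugating only the $n$-block, but then $C_{1n}$ is \emph{not} automatically $\hat D$; that requires a separate argument.

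Second, and more seriously, the disjointness relations you list (against $E_{aa}$, $E_{1i}$, $E_{n1}$) do not pin down $\Phi(E_{in})$ for $2\le i\le n-1$.  Carrying them through, one finds $P B_{in}P^t=Q_1U_1\oplus Q_2U_2$ with $U_1,U_2$ arbitrary unitaries commuting with $Q_1,Q_2$; nothing forces $U_1=U_2=I$.  More generally, any $E_{ab}$ disjoint from $E_{in}$ with $\Phi(E_{ab})$ already known lives in row/column blocks away from $\{i,n\}$ (or reduces to the cases you listed), so products with $\Phi(E_{in})$ vanish for trivial support reasons and give no new constraint.  The paper closes this gap not with Lemma~\ref{lem1}(b) but with Lemma~\ref{lem1}(a) together with rank-one test pairs supported on \emph{three} indices simultaneously, e.g.\ $X_1=(e_1+e_j+e_n)(e_1+e_j+e_n)^t$ versus $X_2=(2e_1-e_j-e_n)(2e_1-e_j-e_n)^t$, whose disjointness yields $B_{1n}=B_{jn}$; analogous pairs give $C_{1n}=C_{jn}$ and then $G=Q_1\oplus(-Q_2)$.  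This three-index trick is the missing idea.
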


\begin{proof}\rm
We prove the result by induction on $m = n \ge 2$.
Suppose $m = n = 2$.
We may choose $V_1 \in \bU_r, V_2 \in \bU_s$
such that
$$Y_1 = V_1\Phi(E_{11})V_2 = \begin{pmatrix} D_1  & 0 & 0  \cr 0 & 0_\ell  & 0 \cr
0 & 0 & 0 \end{pmatrix} \quad \hbox{ and } \quad
Y_2 = V_1\Phi(E_{22})V_2 =
\begin{pmatrix} 0_k  & 0 & 0  \cr 0 & D_2  & 0 \cr
0 & 0 & 0 \end{pmatrix},$$
where $D_1 \in \bM_k, D_2 \in \bM_\ell$ are diagonal matrices with positive
diagonal entries arranged in descending order.
We may replace $\Phi$ by the map
$X \mapsto V_1\Phi(X)V_2$ so that the resulting map
will preserve disjointness and send $E_{jj}$ to $Y_j$ for $j = 1,2$.
By Lemma \ref{lem1}, we can modify $V_1$ and $V_2$ so that the resulting
map satisfies
\begin{align*}
\Phi(E_{11}) &= \begin{pmatrix}
Q_1\oplus Q_2 & 0_k & 0 \cr
0_k & 0_k & 0 \cr
0 & 0 & 0\cr\end{pmatrix}, \quad
\Phi(E_{12}) =\begin{pmatrix}
0_k & Q_1 \oplus 0_{k_2} & 0 \cr
0_{k_1}\oplus Q_2  & 0_k & 0 \cr
0 & 0 & 0\cr\end{pmatrix},
\\
\Phi(E_{21}) &= \begin{pmatrix}
0_k & 0_{k_1} \oplus Q_2 & 0 \cr
Q_1 \oplus 0_{k_2} & 0_k & 0 \cr
0 & 0 & 0\cr\end{pmatrix}, \quad
\Phi(E_{22}) = \begin{pmatrix}
0_k & 0_k  & 0 \cr
0_k & Q_1 \oplus Q_2 & 0 \cr
0 & 0 & 0\cr\end{pmatrix},
\end{align*}
for some diagonal matrices $Q_1,Q_2$ with descending positive diagonal entries.

Now, we can find a permutation matrix $\hat P \in \bM_{2k}$ satisfying
$[X_1| X_2 | X_3|X_4]\hat P = [X_1|X_3 |X_2| X_4]$
whenever  $X_1, X_3 \in \bM_{2k,k_1},
X_2, X_4 \in \bM_{2k,k_2}$. Then
the map
$X \mapsto (\hat P \oplus I_{r-2k})^t\Phi(X)(\hat P \oplus I_{s-2k})$
will satisfy
$$E_{ij} \mapsto  \begin{pmatrix}
E_{ij}\otimes Q_1  & 0_{2k_1,2k_2} & 0_{2k_1,s-2k} \cr
0_{2k_2,2k_1} & E_{ji} \otimes Q_2  & 0_{2k_2,s-2k} \cr
0_{r-2k,2k_1} & 0_{r-2k,2k_2} & 0_{r-2k,s-2k}\cr\end{pmatrix}
\qquad \hbox{ for } 1 \le i, j \le 2.$$
This establishes the assertion for the case when $m=n=2$.

Now, suppose the result holds for square matrices of size smaller than
$n$ with $n > 2$.
Then the restriction of $\Phi$ on matrices $A \in \bM_n$ with the last row and last column equal to zero verifies the conclusion. So, there exist  $U \in \bU_r$ and $V \in \bU_s$ such that
$$U\Phi(E_{ij})V = \begin{pmatrix}
\hat E_{ij} \otimes Q_1 & 0_{(n-1)k_1,(n-1)k_2} & 0\cr
0_{(n-1)k_2,(n-1)k_1} & \hat E_{ji}\otimes Q_2 & 0  \cr
0 & 0 & 0_{r-(n-1)k, s-(n-1)k} \cr\end{pmatrix} \qquad \hbox{ for } 1 \le i, j < n.$$
Here $\{E_{ij}:  1 \le i,  j \le n\}$
is the standard basis for $\bM_{n}$, and
$\{\hat E_{ij}:  1 \le i, j \le n-1\}$
is the standard basis for $\bM_{n-1}$,
$Q_1 \in \bM_{k_1}, Q_2 \in \bM_{k_2}$ are diagonal matrices with
positive diagonal entries, and $k = k_1+k_2$.

Note that $E_{nn}$ and $E_{ij}$ are disjoint for all $1 \le i, j < n$.
So, we may assume that
$$\Phi(E_{nn}) = \begin{pmatrix} 0_{(n-1)k} & 0 \cr 0 & Y\cr\end{pmatrix}$$
for some matrix $Y\in \bM_{r-(n-1)k,s-(n-1)k}$. There exist
$U_1\in \bU_{r-(n-1)k}, V_1\in \bU_{s-(n-1)k}$ such that
$$U_1YV_1 = \begin{pmatrix} D & 0 \cr 0 & 0 \cr\end{pmatrix},$$
where $D$ is a diagonal matrix with positive
diagonal entries arranged in descending order.
We may replace $\Phi$ by the map
$$X \mapsto (I_{(n-1)k} \oplus U_1)\Phi(X)(I_{(n-1)k} \oplus V_1)$$
and assume that $U_1 = I_{r-(n-1)k}$ and $V_1 = I_{s-(n-1)k}.$

Consider the restriction of the
map on the $\operatorname{span}\{E_{11}, E_{1n}, E_{n1}, E_{nn}\}$.
Applying the proof of Lemma \ref{lem1} to the restriction map,
we see that there is a permutation matrix $P$ such that
$D = P^t(Q_1 \oplus Q_2)P$.
Now, replace $\Phi$ by the map
$$
X \mapsto ((I_{n-1}\otimes P^t) \oplus I_{r-(n-1)k})
\Phi(X)((I_{n-1} \otimes P) \oplus I_{s-(n-1)k}).
$$
After a further permutation, we can replace $\hat E_{ij}$ with $E_{ij}$ for $1 \le i, j < n$, and
 the resulting map $\Phi$ satisfies
\begin{equation}\label{ejj}
E_{jj} \mapsto \begin{pmatrix} E_{jj} \otimes D & 0 \cr 0 & 0 \cr\end{pmatrix},
\quad j = 1, \dots, n,
\end{equation}
$$E_{ij}+E_{ji}\mapsto
\begin{pmatrix} (E_{ij} + E_{ji}) \otimes D & 0 \cr
0 & 0_{r-(n-1)k,s-(n-1)k}\cr\end{pmatrix}, \quad 1 \le i \le j < n,$$
$$E_{ij}-E_{ji} \mapsto \begin{pmatrix}
(E_{ij} - E_{ji}) \otimes \hat D & 0 \cr
0 & 0_{r-(n-1)k,s-(n-1)k} \cr\end{pmatrix}, \quad 1 \le i < j < n,$$
where $\hat D = P^t(Q_1 \oplus -Q_2)P$.

For $j = 1, 2, \dots, n-1$, apply  Lemma \ref{lem1}(a)
to the restriction map on the rectangular matrix space
$\operatorname{span} \{E_{jj}, E_{jn}, E_{nj}, E_{nn}\}$.
We see that
$$ \Phi(E_{jn}+E_{nj}) =
\begin{pmatrix} E_{jn}\otimes B_{jn}+
E_{nj}\otimes B_{jn}^* & 0 \cr 0 & 0 \cr\end{pmatrix}, \
\Phi(E_{jn}-E_{nj}) = \begin{pmatrix} E_{jn}\otimes C_{jn}-
E_{nj} \otimes C_{jn}^*  & 0 \cr 0 & 0 \cr\end{pmatrix}.$$
Here $B_{jn},C_{jn} \in \bM_k$ and $D^{-1}B_{jn},  D^{-1}C_{jn} \in \bU_k$ commute with $D$.

Because every matrix in the range of the map $\Phi$ has its
last $r- nk$ rows
and last $s-nk$ columns equal to zero, we will assume that
$r = nk$ and $s = nk$ for simplicity (by removing the last $r-nk$ rows and
$s-nk$ columns from every matrix in the range space).
Let $\{e_1, \dots, e_n\}$ be the standard basis for $\IC^n$.
For $j = 2, \dots, n-1$, consider the disjoint pair
$$X_1 = (e_1 + e_j + e_n)(e_1+e_j+e_n)^t
\quad \hbox{ and } \quad
X_2 = (2e_1-e_j-e_n)(2e_1-e_j-e_n)^t.$$
Then
$\Phi(X_1)$ and $\Phi(X_2)$ are disjoint.
If we partition $\Phi(X_1), \Phi(X_2)$ as $n\times n$ block matrices
$Z = (Z_{ij})_{1 \le i, j \le n}$
such that each block is in $\bM_k$, then all the blocks are
zero except for the $(p,q)$ blocks with $p,q\in \{1,j,n\}$.
Deleting all the zero blocks, we get the following two
$3\times 3$ block matrices.
$$Z_1 =
\begin{pmatrix} D & D & B_{1n} \cr D & D & B_{jn} \cr  B_{1n}^* & B_{jn}^* & D\cr
\end{pmatrix}
\quad \hbox{ and } \quad
Z_2 =
\begin{pmatrix} 4 D & -2 D & -2 B_{1n} \cr
-2 D & D & B_{jn} \cr -2 B_{1n}^* & B_{jn}^* & D\cr
\end{pmatrix}.$$
Both the $(1,1)$ and $(1,2)$ blocks of $Z_1Z_2^*$ equal $0_k$,
i.e.,
$$0_k = 2D^2 - 2B_{1n}B_{1n}^* = -D^2 + B_{1n}B_{jn}^*.$$
We see that
$B_{1n}B_{1n}^* = D^2 = B_{1n}B_{jn}^*$.
Since $B_{1n}$ is the product of $D$ and a unitary
 matrix, it is invertible.
So,  $B_{1n} = B_{jn}$ for $j = 2, \dots, n-1$.

Similarly, we can consider the disjoint pair
$$
X_3 = (e_1 + e_j + e_n)(-e_1-e_j+e_n)^t\quad \text{and}\quad
X_4 = (e_1+e_j-2e_n)(e_1+e_j+2e_n)^t.
$$
Then removing the zero blocks of $\Phi(X_3)$ and $\Phi(X_4)$, we get
$$Z_3 =
\begin{pmatrix} -D & -D & C_{1n} \cr -D & -D & C_{jn}
\cr  -C_{1n}^* & - C_{jn}^* & D\cr
\end{pmatrix}
\quad \hbox{ and } \quad
Z_4 =
\begin{pmatrix} D &  D & 2 C_{1n} \cr
D & D & 2 C_{jn} \cr -2 C_{1n}^* & -2 C_{jn}^* & -4D\cr
\end{pmatrix}.$$
Both the $(1,1)$ and $(1,2)$ blocks of $Z_3Z_4^*$ equal $0_k$, i.e.,
$$0_k = -2D^2 + 2C_{1n}C_{1n}^* = -2D^2 + 2C_{1n} C_{jn}^*.$$
We see that $C_{1n}C_{1n}^* = D^2= C_{1n} C_{jn}^*$.
Since $C_{1n}$ is the product of $D$ and
a unitary (real orthogonal) matrix, it is invertible. Thus,
$C_{1n} = C_{jn}$ for $j = 2, \dots, n-1$.

Let $W$ be the unitary  matrix
$D^{-1} B_{1n}\in \bM_n.$ Replace $\Phi$ by the map
$X \mapsto (I_{(n-1)k} \oplus W)\Phi(X) (I_{(n-1)k} \oplus W^*)$.
Then with $\hat C = C_{jn} W^*$ for $j = 1, \dots, n-1$, we have
$$\Phi(E_{ij}+E_{ji})
=  (E_{ij}+E_{ji})\otimes D, \quad 1 \le i \le j \le n,$$
$$\Phi(E_{ij}-E_{ji}) =  (E_{ij}-E_{ji}) \otimes \hat D,
\quad 1 \le i < j \le n-1,$$
$$\Phi(E_{jn}-E_{nj}) = E_{jn}\otimes \hat C-E_{nj}\otimes {\hat C}^*, \quad j = 1, \dots, n-1.$$
Recall that $P$ is a permutation matrix such that
$D = P^t(Q_1\oplus Q_2)P$.
Now replace $\Phi$ by $X \mapsto (I_n \otimes P)\Phi(X)(I_n\otimes P^t)$.
Then
$$\Phi(E_{ij}+E_{ji}) =  (E_{ij}+E_{ji})\otimes (Q_1\oplus Q_2),
\quad 1 \le i \le j \le n,$$
$$\Phi(E_{ij}-E_{ji}) =  (E_{ij}-E_{ji}) \otimes (Q_1\oplus -Q_2),
\quad 1 \le i < j \le n-1,$$
$$\Phi(E_{jn}-E_{nj}) = E_{jn}\otimes G -E_{nj} \otimes G^*,
\quad j = 1, \dots, n-1,$$
where $G = P\hat C P^t$.

It remains to show that $G = Q_1 \oplus -Q_2$ so that
$E_{jn}\otimes G - E_{nj}\otimes G^* = (E_{jn}-E_{nj})\otimes (Q_1 \oplus -Q_2)$.
To this end,
consider the disjoint pair
$X_5 = E_{22}+E_{nn} - E_{2n}-E_{n2}$ and $X_6 = E_{12}+E_{1n}- E_{21} - E_{n1}$.
Then $Z_5 = \Phi(X_5)$ and $Z_6 = \Phi(X_6)$ are disjoint.
If we partition $\Phi(X_5), \Phi(X_6)$ as $n\times n$ block matrices
$Z = (Z_{ij})_{1 \le i, j \le n}$
such that each block is in $\bM_k$, then all the blocks are
zero except for the $(p,q)$ blocks with $p,q\in \{1,2,n\}$.
Let $Q = Q_1 \oplus Q_2$ and $C_{12} = Q_1 \oplus - Q_2$.
Deleting all the zero blocks, we get the following two matrices.
$$Z_5 =
\begin{pmatrix} 0_k & 0_k & 0_k \cr 0_k & Q & -Q \cr  0_k & -Q & Q\cr
\end{pmatrix}
\quad \hbox{ and } \quad
Z_6 =
\begin{pmatrix} 0_k & C_{12} & G \cr
-C_{12}^* & 0_k & 0_k \cr -G^* & 0_k & 0_k\cr
\end{pmatrix}.$$
Now, the $(1,2)$ block of $Z_6 Z_5^*$ is zero, i.e.,
$C_{12}Q = GQ$. It follows that $G = C_{12} = Q_1 \oplus -Q_2$.
Thus, the desired result follows. \end{proof}

To prove the theorem when the domain is $\bM_{m,n}$ with $m < n$,
we can apply the result for the restriction of $\Phi$ to the
subspace spanned by $\{E_{ij}: 1 \le i, j \le m\}$ and assume the
restriction map has nice structure. Then we have to show that $\Phi(E_{il})$
also has a nice form for $l > m$. To do that we need another technical lemma
showing that if $\Phi(E_{ij})$ and $\Phi(E_{kj})$ have nice forms,
then $\Phi(E_{il})$ and $\Phi(E_{kl})$ also have nice forms.
We  state and prove the results for a special case in the following, in view of Lemma \ref{lem:M2all}.

\begin{lemma} \label{lem3}
Let $Q_1\in \bM_{k_1}, Q_2\in \bM_{k_2}$ with $k_1+k_2 = k$
be diagonal matrices with positive diagonal entries arranged in descending order.
Let $\Phi: \bM_2 \rightarrow \bM_{r,s}$ be a nonzero linear map preserving disjointness.
\begin{itemize}
\item[{\rm (a)}] Assume
$$\Phi(E_{11}) = \begin{pmatrix}
Q_1 & 0 & 0 & 0 \cr
0 & 0_{k_1,k_2} & 0 & 0 \cr
0 & Q_2 & 0_{k_2} & 0 \cr
0 & 0 & 0 & 0_{r_1, s_1}\cr\end{pmatrix},
\quad
\Phi(E_{21}) = \begin{pmatrix}
0_{k_1}& 0 & 0 & 0 \cr
Q_1 & 0_{k_1,k_2} & 0 & 0 \cr
0 & 0_{k_2} &Q_2 & 0 \cr
0 & 0 & 0 & 0_{r_1,s_1}\cr\end{pmatrix},$$
where $(r_1,s_1) = (r-2k_1-k_2, s - k_1-2k_2)$.
Then there exist $R = R_1 \oplus R_2 \in \bU_{k_1} \oplus \bU_{k_2}$,
$U \in \bU_{r-k}, V \in \bU_{s-k}$ such that
\begin{gather*}
R_1^*Q_1 R_1 = Q_1,\quad R_2^* Q_2 R_2 = Q_2,\\
U \begin{pmatrix} Q_1 \cr 0_{r-k-k_1,k_1} \cr\end{pmatrix} R_1 =
\begin{pmatrix} Q_1 \cr 0_{r-k-k_1,k_1}\cr\end{pmatrix},
\intertext{and}
R_2^* ( Q_2 \ | \ 0_{k_2,s-k-k_2}) V = (Q_2 \ | \ 0_{k_2, s-k-k_2});
\end{gather*}
moreover, if $U = \begin{pmatrix} U_{11} & U_{12} \cr U_{21} & U_{22} \cr
\end{pmatrix}$ with $U_{11} \in \bM_{k_1}$, then the modified map  $\Psi$ defined by
$$X \mapsto
\begin{pmatrix} R_1^* & 0 & 0 & 0\cr
0 & U_{11} & 0 & U_{12} \cr
0 & 0 & R_2^* & 0 \cr
0 & U_{21} & 0 & U_{22}\cr\end{pmatrix}
\Phi(X) \begin{pmatrix} R_1 & 0 & 0 \cr 0 & R_2 & 0 \cr 0 & 0 & V\cr\end{pmatrix}
$$
satisfies
\begin{gather*}
\Psi(E_{11}) =  \Phi(E_{11}),\quad \Psi(E_{21}) = \Phi(E_{21}),\\
\Psi(E_{12}) =
\begin{pmatrix}
0_{k_1} & 0 & 0 & Q_1 & 0 \cr
0 & 0_{k_1,k_2} & 0 & 0 & 0\cr
0 & 0 & 0_{k_2} & 0 & 0\cr
0 & Q_2 & 0 & 0 & 0\cr
0 & 0 & 0 & 0 & 0_{r-2k,s-2k}\cr\end{pmatrix},
\
\Psi(E_{22}) =
\begin{pmatrix}
0_{k_1} & 0 & 0 & 0 & 0 \cr
0 & 0_{k_1,k_2} & 0 & Q_1 & 0\cr
0 & 0 & 0_{k_2} & 0 & 0\cr
0 & 0 & Q_2 & 0 & 0\cr
0 & 0 & 0 & 0 & 0_{r-2k,s-2k}\cr\end{pmatrix}.
\end{gather*}
Consequently, before the modification we have
$$\Phi(E_{12}) =
\begin{pmatrix}
0_{k_1} & 0 & 0 & \hat Y_{1} \cr
0  & 0_{k_1,k_2} & 0 & 0\cr
0 & 0 & 0_{k_2} & 0 \cr
0 & \hat Y_{2} & 0 & 0_{r_1,s_1} \cr \end{pmatrix},
\  \Phi(E_{22}) =
\begin{pmatrix}
0_{k_1} & 0 & 0 & 0  \cr
0  & 0_{k_1,k_2} & 0 & \hat Z_{1} \cr
0 & 0 & 0_{k_2} & 0 \cr
0 & 0 &  \hat Z_{2} & 0_{r_1,s_1} \cr \end{pmatrix},
$$
where $\hat Y_1, \hat Z_1$ have singular values equal to the diagonal entries of
$Q_1$, and $\hat Y_2, \hat Z_2$ have singular values equal to the diagonal entries
of $Q_2$.
\item[{\rm (b)}]
Suppose
\begin{equation} \label{lem3eq}
\Phi(E_{ij}) = \begin{pmatrix} E_{ij} \otimes Q_1
& 0 & 0 \cr
0 & E_{ji}\otimes Q_2 & 0  \cr
0 & 0 & 0_{r_2, s_2}\cr\end{pmatrix} \quad \hbox{ for }
(i,j) \in \{(1,1), (2,1), (2,2)\},
\end{equation}
 and $(r_2, s_2) = (r-2k,s-2k)$.
Then $\Phi(E_{12})$ also satisfies {\rm (\ref{lem3eq})}.
\end{itemize}
\end{lemma}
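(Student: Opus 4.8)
The plan is to imitate the proofs of Lemmas \ref{lem1} and \ref{lem2}: first use the static disjointness relations between the standard matrix units to locate the supports of the unknown images $\Phi(E_{12})$ and $\Phi(E_{22})$, then feed one-parameter families of disjoint rank-one pairs through the disjointness-preserving hypothesis to identify each surviving block as $Q_i$ times a unitary that commutes with $Q_i$, and finally absorb those unitaries by a conjugation that is designed to leave $\Phi(E_{11})$ and $\Phi(E_{21})$ untouched.

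For part (a), I would start from $E_{12}\perp E_{21}$, so $\Phi(E_{12})^*\Phi(E_{21})=0$ and $\Phi(E_{12})\Phi(E_{21})^*=0$; reading these equalities block by block against the explicit form of $\Phi(E_{21})$ and using that $Q_1,Q_2$ are invertible forces $\Phi(E_{12})$ to vanish except for one block $\widehat Y_1$ in the first $k_1$ rows (and in the columns not occupied by $\Phi(E_{11}),\Phi(E_{21})$) and one block $\widehat Y_2$ in the ``$Q_2$-rows''. The relation $E_{11}\perp E_{22}$ gives $\Phi(E_{22})$ the analogous support, with blocks $\widehat Z_1,\widehat Z_2$. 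Next, restricting $\Phi$ to $\operatorname{span}\{E_{11},E_{12},E_{21},E_{22}\}\cong\bM_2$, I would run the computation of Lemma \ref{lem1} with the disjoint pairs $Z_1(\gamma)=\gamma E_{11}+E_{12}+E_{21}+\frac{1}{\gamma}E_{22}$ and $Z_2(\gamma)=\frac{1}{\gamma}E_{11}-E_{12}-E_{21}+\gamma E_{22}$, together with the matching $E_{12}-E_{21}$ pair; letting $\gamma$ vary yields, exactly as in (\ref{eq1})--(\ref{eq2}), that $\widehat Y_1$ and $\widehat Z_1$ are each $Q_1$ times a unitary commuting with $Q_1$, and $\widehat Y_2,\widehat Z_2$ the analogues for $Q_2$; in particular their singular values are the diagonal entries of the corresponding $Q_i$, which is the ``consequently'' clause. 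The last step is the absorption: on the column carrying $Q_1$ one applies some $R_1\in\bU_{k_1}$ and on the two rows carrying a copy of $Q_1$ one applies $R_1^*$, using $R_1^*Q_1R_1=Q_1$, and likewise for $R_2$; this fixes $\Phi(E_{11}),\Phi(E_{21})$ while trivializing the unitary factors of the new blocks, and what remains is a full unitary $U\in\bU_{r-k}$ on the rows outside the range of $\Phi(E_{11})$ (constrained by the side condition, displayed in the statement, that it still fix $\Phi(E_{21})$) and a full unitary $V\in\bU_{s-k}$ on the columns outside the range of $\Phi(E_{21})$, which together carry $\widehat Y_1,\widehat Y_2,\widehat Z_1,\widehat Z_2$ to the canonical positions. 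Assembling the conjugating matrix in the precise block form of the statement and verifying that it simultaneously fixes $\Phi(E_{11}),\Phi(E_{21})$ and normalizes $\Phi(E_{12}),\Phi(E_{22})$ is the bookkeeping core of the lemma, and is where I expect the main difficulty to lie.

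For part (b), I would first apply a fixed permutation of rows and of columns so that the hypotheses on $\Phi(E_{11})$ and $\Phi(E_{21})$ become exactly those of part (a). Part (a) then shows that, in the appropriate block coordinates, $\Phi(E_{12})$ vanishes outside the two blocks occupied by $E_{12}\otimes Q_1$ and $E_{21}\otimes Q_2$, and that those blocks are $Q_1$ and $Q_2$ times unitaries commuting with $Q_1,Q_2$. It remains to upgrade these unitaries to the identity; here I would use $E_{12}\perp E_{21}$ (which now involves the known $\Phi(E_{21})$) together with a one-parameter family $Z_1(\gamma)\perp Z_2(\gamma)$ as above (which involves the known $\Phi(E_{11})$ and $\Phi(E_{22})$), in exactly the manner of the closing step of Lemma \ref{lem2}, where the analogous unitary $G$ was forced to equal $Q_1\oplus -Q_2$. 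Consequently $\Phi(E_{12})=E_{12}\otimes Q_1\oplus E_{21}\otimes Q_2$ up to the trailing zero block, as claimed. No devices beyond those of Lemma \ref{lem1} are needed; the effort is entirely in the indexing.
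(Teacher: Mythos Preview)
For part (a), your localization step over-claims. The relation $E_{12}\perp E_{21}$ only tells you that $\Phi(E_{12})$ vanishes in row blocks $2,3$ (orthogonality of ranges) and in column blocks $1,3$ (orthogonality of co-ranges); this still leaves \emph{four} potentially nonzero blocks, at positions $(1,2),(1,4),(4,2),(4,4)$, not two. There is no disjointness between $E_{12}$ and $E_{11}$ (note $E_{11}^*E_{12}=E_{12}\neq 0$), so invoking ``columns not occupied by $\Phi(E_{11})$'' is unjustified. The same over-claim applies to $\Phi(E_{22})$ via $E_{11}\perp E_{22}$. To kill the spurious blocks you would have to run the $Z_1(\gamma),Z_2(\gamma)$ families with \emph{both} $\Phi(E_{12})$ and $\Phi(E_{22})$ unknown simultaneously, a substantially messier system than that of Lemma~\ref{lem1}, where $\Phi(E_{11})$ and $\Phi(E_{22})$ are both given in diagonal form at the outset.

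The paper sidesteps this by running the logic in reverse and invoking Lemma~\ref{lem1} as a black box. First a row permutation $P_1$ puts $\hat\Phi(E_{11})$ into the top-left corner as $Q_1\oplus Q_2$; then, using $E_{11}\perp E_{22}$ and an SVD, $\hat\Phi(E_{22})$ is diagonalized below it; now Lemma~\ref{lem1} applies and produces unitaries $R\in\bU_k$, $U\in\bU_{r-k}$, $V\in\bU_{s-k}$ carrying the map to the standard $\bM_2$ form $\Psi$. The crux is to compare the \emph{known} $\hat\Phi(E_{21})$ with $\Psi(E_{21})$: matching singular values of the $(2,1)$ blocks forces $\hat Q_i=Q_i$, hence $\Psi(E_{21})=\hat\Phi(E_{21})$, and it is this equality that squeezes out the block structure $R=R_1\oplus R_2$ and the displayed intertwining relations for $U,V$. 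One then computes $\hat\Phi(E_{12})$ and $\hat\Phi(E_{22})$ by inverting the conjugation; a single use of $\Phi(E_{12})\perp\Phi(E_{21})$ at the very end kills two off-diagonal sub-blocks $U_{21}^*,V_{12}^*$, and this is exactly what collapses the four a priori blocks down to the two claimed. Part (b) is handled by the same bootstrap rather than via part (a): Lemma~\ref{lem1}(b) furnishes a genuine conjugation $\Psi(X)=(W^*\oplus I)\Phi(X)(W\oplus I)$, and once $\Psi(E_{21})=\Phi(E_{21})$ is established, taking adjoints (using $\Psi(E_{12})=\Psi(E_{21})^*$ in the standard form) immediately gives $\Phi(E_{12})=\Psi(E_{12})$, with no further families needed.
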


\noindent
\begin{proof}\rm (a)
By Lemma \ref{lem1}, we know that
the disjoint matrices $\Phi(E_{22})$ and $\Phi(E_{11})$ have the same rank. So, $r, s \ge 2k$.
Let $P_1 \in \bM_{2k}$ be a permutation matrix such that
$[X_1|X_2|X_3|X_4]P_1 = [X_1|X_3|X_2|X_4]$
whenever $X_1, X_2 \in \bM_{2k,k_1}$
and $X_3, X_4 \in \bM_{2k,k_2}$.
Then the map $\hat \Phi$ defined by
$\hat \Phi(X) = (P_1^t \oplus I_{r-2k})\Phi(X)$ will still preserve disjointness such that
$\hat\Phi(E_{11})$ and $\hat \Phi(E_{21})$  equal
$$\hat \Phi(E_{11}) =
\begin{pmatrix}
Q_1 & 0 & 0 & 0  \cr
0 & Q_2 & 0 & 0 \cr
0 & 0 & 0_{k_1,k_2}  & 0 \cr
0 & 0 & 0 & 0_{r_1,s_1} \end{pmatrix} \quad \hbox{ and } \quad
\hat \Phi(E_{21}) =
\begin{pmatrix} 0_{k_1}  &  0 & 0 & 0\cr
0 & 0_{k_2} &  Q_2 & 0  \cr
Q_1 & 0 & 0_{k_1,k_2} & 0 \cr
0 & 0 & 0 &  0_{r_1, s_1} \cr
\end{pmatrix}.$$
Suppose $P_2 \in \bM_k$ is a permutation matrix such that
$D_1 = P_2^t(Q_1\oplus Q_2)P_2$ has diagonal entries
arranged in descending order.
We can then find   $U_1 \in\bU_{r-k}$
and $V_1 \in \bU_{s-k}$
such that
$$(P_2^t \oplus U_1)\hat \Phi(E_{22}) (P_2 \oplus V_1)
= \begin{pmatrix} 0_k & 0 & 0 \cr
0 & D_2 & 0 \cr
0 & 0 & 0 \cr \end{pmatrix},$$
where $D_2$ is a diagonal matrix with positive diagonal entries arranged in descending order.

Applying Lemma \ref{lem1}, we can find
$S_2 \in \bU_k, U_2 \in \bU_{r-k}, V_2 \in \bU_{s-k}$ such that
the map $\Psi_1$ defined by
$$X\mapsto (S_2^*\oplus U_2) (P_2^t \oplus U_1) \hat \Phi(X)
(P_2 \oplus V_1)(S_2 \oplus V_2)$$
satisfies
$$E_{ij} \mapsto
(E_{ij} \otimes (\hat Q_1 \oplus 0_{\ell_2}) + E_{ji}\otimes (0_{\ell_1} \oplus \hat Q_2)), \quad 1 \le i, j \le 2,$$
where $\hat Q_1 \in \bM_{\ell_1}$ and $\hat Q_2 \in \bM_{\ell_2}$
are diagonal matrices with positive diagonal entries arranged in descending order. Let $\Psi$ be defined by $\Psi(X)=\Psi_1(X)(I_k\oplus P_3\oplus I_{s-2k})$, where $P_3 \in \bM_k$ is a permutation matrix such that
$[X_1|X_2]P_3 = [X_2|X_1]$ whenever $X_1\in \bM_{k,k_1}$
and $X_2\in \bM_{k,k_2}$. Then the map $\Psi$ satisfies
$$E_{ij} \mapsto (E_{ij} \otimes (\hat Q_1 \oplus 0_{\ell_2}) + E_{ji}\otimes (0_{\ell_1} \oplus \hat Q_2))(I_k\oplus P_3\oplus I_{s-2k}), \quad 1 \le i, j \le 2.$$
Let $R = P_2S_2 \in \bM_k$, $V  = V_1V_2(P_3\oplus I_{s-2k}) \in \bM_{s-k}$, and $U = U_2 U_1 \in \bM_{r-k}$.
Then
$$\Psi(X) = (R^* \oplus U)\hat \Phi(X) (R \oplus V) \quad \hbox{ for all } X\in \bM_2.
$$
If we partition
$\Psi(X)$ into a $2\times 2$ block matrix such that the $(1,1)$ block
lies in $\bM_k$, then
the diagonal entries of $\hat Q_1$ are the singular values
of the $(2,1)$ block of $\hat \Phi(E_{21})$ (using the same
partition). So, $\hat Q_1 = Q_1$ and $\hat Q_2 = Q_2$.
Hence, $\hat \Phi(E_{21}) = \Psi(E_{21})$. It follows that
\begin{equation}
\label{eq-r}
R^* \begin{pmatrix}
0_{k_1, k_2} & 0_{k_1,s_1} \cr
Q_2 & 0_{k_2,s_1} \cr\end{pmatrix}V =
\begin{pmatrix} 0_{k_1, k_2} & 0_{k_1,s_1} \cr
Q_2 & 0_{k_2,s_1} \cr\end{pmatrix},  \ \
U\begin{pmatrix} Q_1 & 0_{k_1,k_2}\cr 0_{r_1,k_1} & 0_{r_1,k_2} \cr\end{pmatrix}R
= \begin{pmatrix} Q_1 & 0_{k_1,k_2}\cr 0_{r_1,k_1} & 0_{r_1,k_2}
\cr\end{pmatrix}.
\end{equation}
As a result,
$$R^* \begin{pmatrix} 0_{k_1} & 0 \cr 0 & Q_2^2\cr \end{pmatrix}  R =
\begin{pmatrix} 0_{k_1} & 0 \cr 0 & Q_2^2\cr \end{pmatrix} \quad \hbox{ and }
\quad
R^* \begin{pmatrix} Q_1^2 & 0 \cr 0 & 0_{k_2}\cr \end{pmatrix}R
=
\begin{pmatrix} Q_1^2 & 0 \cr 0 & 0_{k_2}\cr \end{pmatrix} .$$
Thus, $R = R_1 \oplus R_2$ with $R_1 \in \bM_{k_1}, R_2 \in \bM_{k_2}$.
Since $Q_1$ and $Q_2$ are diagonal matrices with positive diagonal entries,
we see that $R_1^*Q_1R_1 = Q_1$ and $R_2^*Q_2R_2 = Q_2$.
Moreover, by (\ref{eq-r}) we have
$$U \begin{pmatrix} Q_1 \cr 0_{r-k-k_1,k_1} \cr\end{pmatrix} R_1 =
\begin{pmatrix} Q_1 \cr 0_{r-k-k_1,k_1}\cr\end{pmatrix}
\ \hbox{ and } \
R_2^* ( Q_2 \ | \ 0_{k_2,s-k-k_2}) V = (Q_2 \ | \ 0_{k_2, s-k-k_2}).$$
One can then check that the modified map $\hat\Psi(X)=(P_1^t\oplus I_{r-2k})\Psi(X)$ has the desired property.

Now, we turn to $\Phi(E_{12})$ and $\Phi(E_{21})$.
If $U = (U_{ij})_{1 \le i, j \le 3}\in \bM_{r-k}$ with
$U_{11} \in \bM_{k_1}, U_{22} \in \bM_{k_2}$, and
$V = (V_{ij})_{1 \le i, j \le 3} \in \bM_{s-k}$ with $V_{11} \in \bM_{k_2},
 V_{22} \in \bM_{k_1}$, then
$$\hat \Phi(E_{12})
= (R\oplus U^*)\Psi(E_{12})(R^*\oplus V^*)
= \begin{pmatrix} 0_k & F_{12} \cr
F_{21} & 0_{r-k,s-k} \end{pmatrix},$$
where
$$F_{12} =
R\begin{pmatrix} 0 & Q_1 & 0 \cr 0_{k_2} & 0_{k_2, k_1} & 0_{k_2,s-2k}
\cr\end{pmatrix} V^*
= \begin{pmatrix}
R_1Q_1 V_{12}^* & R_1Q_1 V_{22}^* & R_1 Q_1 V_{32}^*\cr
0_{k_2} & 0_{k_2,k_1} & 0_{k_2, s-2k} \cr\end{pmatrix},$$
$$
F_{21} = U^* \begin{pmatrix} 0_{k_1} & 0 \cr 0 & Q_2 \cr 0 & 0 \cr\end{pmatrix}
R^* = \begin{pmatrix} 0_{k_1} & U_{21}^*Q_2R_2^* \cr
0_{k_2,k_1} & U_{22}^*   Q_2 R_2^*\cr
0_{r-2k,k_1} & U_{23}^* Q_2 R_2^*\cr
\end{pmatrix}.$$
Note that $\hat \Phi(E_{12})$ and $\hat \Phi(E_{21})$ are disjoint.
So, $U_{21}^* Q_2 R_2^*,
R_1Q_1V_{12}^*\in \bM_{k_1,k_2}$ are zero blocks.
Since $R_1Q_1$ and $Q_2R_2^*$ are invertible, we see that
\begin{equation} \label{V11U21}
U_{21}^* = 0_{k_1,k_2} \quad \hbox{ and } \quad V_{12}^* = 0_{k_1,k_2}.
\end{equation}
As a result, $\Phi(E_{12}) = (P_1\oplus I_{r-2k})\hat \Phi(E_{12})$ has the asserted form
with
$$\hat Y_{1} =
(R_1Q_1 V_{22}^* \ | \  R_1 Q_1 V_{32}^*)\quad \hbox{ and } \quad
\hat Y_{2} =
\begin{pmatrix} U_{22}^*   Q_2 R_2^*\cr  U_{23}^* Q_2 R_2^*\cr
\end{pmatrix}.$$
Also, $\hat \Phi(E_{22}) =
\begin{pmatrix}0_k & 0 \cr 0 & G\cr\end{pmatrix}$ with
\begin{eqnarray*}
G &=& U^*\begin{pmatrix}
0 & Q_1 & 0 \cr Q_2 & 0 & 0 \cr 0 & 0 & 0_{r-2k,s-2k} \cr
\end{pmatrix}V^* \\
&=& U^*\begin{pmatrix}
0 & Q_1 & 0 \cr 0_{k_2} & 0 & 0 \cr 0 & 0 & 0_{r-2k,s-2k} \cr
\end{pmatrix}V^* +
U^*\begin{pmatrix}
0 & 0_{k_1} & 0 \cr Q_2 & 0 & 0 \cr 0 & 0 & 0_{r-2k,s-2k} \cr
\end{pmatrix}V^*
\\
&=& U^*\begin{pmatrix}
0 & Q_1  \cr 0_{k_2} & 0  \cr 0 & 0 & \cr
\end{pmatrix}
R^{'*}R'   \begin{pmatrix}
V_{11}^* & V_{21}^* & V_{31}^*  \cr V_{12}^* & V_{22}^* & V_{32}^* \cr
\end{pmatrix} +
\begin{pmatrix}
U_{11}^* & U_{21}^*  \cr U_{12}^* & U_{22}^*  \cr U_{13}^* & U_{23}^*  \cr
\end{pmatrix}
R^* R \begin{pmatrix}
0 & 0_{k_1} & 0 \cr Q_2 & 0 & 0 \cr
\end{pmatrix}V^* \\
&=& \begin{pmatrix}
0 & Q_1  \cr 0_{k_2} & 0  \cr 0 & 0 & \cr
\end{pmatrix}
R' \begin{pmatrix}
V_{11}^* & V_{21}^* & V_{31}^*  \cr V_{12}^* & V_{22}^* & V_{32}^* \cr
\end{pmatrix} +
 \begin{pmatrix}
U_{11}^* & U_{21}^*  \cr U_{12}^* & U_{22}^*  \cr U_{13}^* & U_{23}^*  \cr
\end{pmatrix} R^* \begin{pmatrix}
0_{k_1,k_2} & 0 & 0 \cr Q_2 & 0 & 0 \cr
\end{pmatrix}
\end{eqnarray*}
by (\ref{eq-r}), where $R'=R_2\oplus R_1$.
Thus,  by (\ref{V11U21}), we have
$$G = \begin{pmatrix} Q_1R_1V_{12}^* + U_{21}^*R_2^*Q_2
& Q_1R_1V_{22}^* & Q_1 R_1V_{32}^* \cr
U_{22}^* R_2^* Q_2 & 0 & 0 \cr
U_{23}^* R_2^* Q_2 & 0 & 0 \cr
\end{pmatrix} =
\begin{pmatrix} 0_{k_1,k_2}
& Q_1R_1V_{22}^* & Q_1 R_1V_{32}^* \cr
U_{22}^* R_2^* Q_2 & 0 & 0 \cr
U_{23}^* R_2^* Q_2 & 0 & 0 \cr
\end{pmatrix}.
$$
As a result, $\Phi(E_{22}) = (P_1\oplus I_{r-2k})\hat\Phi(E_{22})$ has the asserted form
with
$$\hat Z_{1} = (Q_1R_1V_{22}^* \ | \ Q_1 R_1V_{32}^*) \quad \hbox{ and } \quad
\hat Z_{2} =
\begin{pmatrix}
U_{22}^* R_2^* Q_2 \cr
U_{23}^* R_2^* Q_2 \cr
\end{pmatrix}.
$$

(b)
Applying a block permutation, we may assume that
$\Phi(E_{11})$,
$\Phi(E_{21})$, $\Phi(E_{22})$ equal
$$\begin{pmatrix} Q_1 \oplus Q_2  & 0_k & 0  \cr
0_k & 0_k  & 0 \cr
0 & 0 & 0_{\hat r,\hat s} \end{pmatrix}, \quad
\begin{pmatrix} 0_{k}  & 0_{k_1} \oplus Q_2 & 0  \cr
Q_1 \oplus 0_{k_2} & 0_k  & 0 \cr
0 & 0 & 0_{\hat r, \hat s} \cr
\end{pmatrix}, \quad
\begin{pmatrix} 0_k  & 0_k & 0  \cr 0_k & Q_1 \oplus Q_2  & 0 \cr
0 & 0 & 0_{\hat r, \hat s} \cr \end{pmatrix},$$
respectively. We need to show that
$$\Phi(E_{12}) =
\begin{pmatrix} 0_{k}  & Q_1 \oplus 0_{k_2} & 0  \cr
0_{k_1} \oplus Q_2  & 0_k  & 0 \cr
0 & 0 & 0_{\hat r, \hat s}
\end{pmatrix}.
$$

Suppose $\hat P \in \bM_k$ is a permutation
matrix such that $\hat D = \hat P^t(Q_1 \oplus Q_2)\hat P$ is a diagonal
matrix with entries in descending order. Applying Lemma \ref{lem1} to
the map
$$X \mapsto (\hat P^t \oplus \hat P^t \oplus I_{r-2k})
\Phi(X)(\hat P \oplus \hat P \oplus I_{s-2k}),$$
we conclude that there exist a permutation $P \in \bM_k$
and $W_1, W_2 \in \bU_k$ commuting with $\hat D$ such that
for $W = \hat P W_1 W_2 P \oplus \hat P W_2 P \in \bM_{2k}$,
the map $\Psi$ defined by
$X \mapsto  (W^*\oplus I_{r-2k})\Phi(X)(W\oplus I_{s-2k})$
has the form
$$E_{ij} \mapsto
E_{ij} \otimes (\hat Q_1 \oplus 0_{\ell_2}) + E_{ji}\otimes (0_{\ell_1} \oplus \hat Q_2), \quad 1 \le i, j \le 2,$$
where $\hat Q_1 \in \bM_{\ell_1}$ and $\hat Q_2 \in \bM_{\ell_2}$
are diagonal matrices with positive diagonal entries arranged in descending order.
Note that the diagonal entries of $\hat Q_1$ are the singular values
of the $(1,2)$ block of $\Phi(E_{12})$. So, $\hat Q_1 = Q_1$ and $\hat Q_2 = Q_2$.
Consequently,
$$\Phi(X) =  (W\oplus I_{r-2k})\Psi(X)(W^*\oplus I_{s-2k})$$
has the asserted form. \end{proof}

\begin{proof}[Proof of Theorem \ref{thm1}]
Without loss of generality, we assume $2 \le m\le n$.
We prove the result by induction on $n-m$.
If $n-m = 0$, the result follows from Lemma \ref{lem2}.
Suppose $n-m=\ell \geq 1$ and the result holds for the cases
when $n-m < \ell$.

By the induction assumption on the restriction map of $\Phi$ on the span of $\cC_n = \{E_{ij}: 1 \le i\le m, 1 \le j < n\}$, there are diagonal matrices $Q_1\in \bM_{k_1}, Q_2\in \bM_{k_2}$ with positive entries arranged in descending order, and $U_1 \in \bU_r, V_1 \in \bU_s$
such that the map $U_1\Phi(X)V_1$ satisfies
\begin{equation}\label{case3}
E_{ij} \mapsto \begin{pmatrix} \hat E_{ij} \otimes Q_1 & 0 & 0\cr
0 &  \hat E_{ji} \otimes Q_2  & 0 \cr
0 & 0 & 0_{\hat r, \hat s}
\cr\end{pmatrix} \qquad \hbox{ for all } E_{ij} \in \cC_n,
\end{equation}
where
$\{E_{ij}: 1 \le i \le m, 1 \le j \le n\}$
is the standard basis for $\bM_{m,n}$,
$\{\hat E_{ij}: 1 \le i \le m, 1 \le j < n\}$
is the standard basis for $\bM_{m,n-1}$, and
$(\hat r, \hat s) = (r - mk_1 - (n-1)k_2, s - (n-1)k_1-mk_2)$.
For notational simplicity, we assume that $U_1 = I_r, V_1 = I_s$.

Consider the restriction of $\Phi$ on
$\operatorname{span}\{E_{ij}, E_{in}, E_{mj}, E_{mn}\}$ for all
$1 \le i < m, 1 \le j < n$.
By Lemma \ref{lem3} (a), we see that
\begin{equation}\label{Emn}
\Phi(E_{mn}) = \begin{pmatrix} 0_{mk_1,(n-1)k_1} & 0 & Z_{1} \cr
0 & 0_{(n-1)k_2,mk_2} & 0 \cr
0 & Z_{2} & 0_{\hat r, \hat s} \cr\end{pmatrix},
\end{equation}
where only the last $k_1$ rows of $Z_{1}$ can be
nonzero, and only the last $k_2$ columns of $Z_{2}$ can be nonzero.

Similarly,
\begin{equation} \label{E1n}
\Phi(E_{1n}) = \begin{pmatrix} 0_{mk_1,(n-1)k_1} & 0  & Y_{1} \cr
0 & 0_{(n-1)k_2,mk_1} & 0 \cr
0 & Y_{2} & 0_{\hat r,\hat s} \cr\end{pmatrix}
\end{equation}
where only the first $k_1$ rows of $Y_{1}$ can be
nonzero, and only the first $k_2$ columns of $Y_{2}$ can be nonzero.

Now, consider the restriction of $\Phi$
on $\operatorname{span}\{E_{11}, E_{1n}, E_{m1}, E_{mn}\}.$
By Lemma \ref{lem3} (a), there exist
$R = R_1 \oplus R_2 \in \bU_{k_1} \oplus \bU_{k_2}$,
$U \in \bU_{\hat r}$ and
$V \in \bU_{\hat s}$
such that
\begin{gather*}
R_1^*Q_1 R_1 = Q_1,\quad
R_2^* Q_2 R_2 = Q_2,\\
U \begin{pmatrix} Q_1 \cr 0_{\hat r -k_1,k_1} \cr\end{pmatrix} R_1 =
\begin{pmatrix} Q_1 \cr 0_{\hat r-k_1,k_1}\cr\end{pmatrix},
\ \hbox{ and } \
R_2^* ( Q_2 \ | \ 0_{k_2,\hat s-k_2}) V = (Q_2 \ | \ 0_{k_2, \hat s-k_2});
\end{gather*}
moreover, if $U = \begin{pmatrix} U_{11} & U_{12} \cr U_{21} & U_{22}\cr
\end{pmatrix}$ with $U_{11} \in \bM_{k_1}$, then
$$\begin{pmatrix} R_1^* & 0 & 0 & 0\cr
0 & U_{11} & 0 & U_{12} \cr
0 & 0 & R_2^* & 0 \cr
0 & U_{21} & 0 & U_{22}\cr\end{pmatrix}
\begin{pmatrix}
0_{2k_1,k_1} &  0_{2k_1,2k_2} & Z_{1}\cr
0 & 0_{k_2,2k_2} &  0 \cr
0 & Z_{2} & 0_{\hat r, \hat s}\cr
\end{pmatrix} \begin{pmatrix} R_1 & 0 & 0 \cr 0
& R_2 & 0 \cr 0 & 0 & V\cr\end{pmatrix}$$
$$= \begin{pmatrix}
0_{k_1}  &  0 & 0 & 0 & 0 \cr
0 &  0_{k_1,k_2} & 0 & Q_1 & 0 \cr
0 & 0 & 0_{k_2} & 0 & 0\cr
0 & 0 & Q_2 & 0_{k_2,k_1} & 0  \cr
0 & 0 & 0 & 0 & 0_{r_1,s_1},
\end{pmatrix}, \hskip 2in \
$$
where $(r_1,s_1) = (\hat r-2k_1, \hat s-2k_2)$.
Consequently,
the modified map  $\Psi$ defined by
$$X \mapsto
\begin{pmatrix} I_{m-1}\otimes R_1^* & 0 & 0 & 0\cr
0 & U_{11} & 0 & U_{12} \cr
0 & 0 & I_{n-1}\otimes R_2^* & 0 \cr
0 & U_{21} & 0 & U_{22}\cr\end{pmatrix}
\Phi(X) \begin{pmatrix} I_{n-1}\otimes R_1 & 0 & 0 \cr 0
& I_{m-1} \otimes R_2 & 0 \cr 0 & 0 & V\cr\end{pmatrix}
$$
satisfies $\Psi(E_{ij}) =  \Phi(E_{ij})$ for all
$1 \le i \le m, 1 \le j < n-1$,  and
$\Psi(E_{mn})$ has the form (\ref{Emn})
with
$$Z_{1} = \begin{pmatrix} 0 & 0 \cr Q_1 & 0 \cr\end{pmatrix}
\ \hbox{ and } \
Z_{2} = \begin{pmatrix} 0 & Q_2 \cr 0 & 0 \cr\end{pmatrix}.$$
Let $\tilde P \in \bM_s$ be the permutation matrix satisfying
$[X_1|X_2|X_3|X_4]\tilde P = [X_1|X_3|X_2|X_4]$ whenever
$X_1 \in \bM_{r,(n-1)k_1}$, $X_2 \in \bM_{r, mk_2}$,
$X_3 \in \bM_{r,k_1}$, $X_4 \in \bM_{r, \hat s - k_1}$.
Then the map $\hat \Psi$ defined by
$X \mapsto  \Psi(X) \tilde P$
satisfies
\begin{equation} \label{final}
\hat \Psi(E_{ij}) =
\begin{pmatrix}
E_{ij}\otimes Q_1 & 0 & 0 \cr
0 & E_{ji}\otimes Q_2 & 0 \cr
0 & 0 & 0_{\hat r - k_2, \hat s - k_1}\cr\end{pmatrix}
\end{equation}
for $(i,j) \in \{ (u,v): 1\le u \le m, 1 \le v < n\} \cup \{(m,n)\}$.
For $j = 2, \dots, n-1$, consider the
restriction of $\Psi$ on $\operatorname{span}\{ E_{jj}, E_{jn}, E_{mj}, E_{mn}\}$.
Thus, $\hat \Psi(E_{jj}), \hat \Psi(E_{mj}), \hat \Psi(E_{mn})$
have the form (\ref{final}), and so must $\hat \Psi(E_{jn})$ by Lemma \ref{lem3} (b).
As a result, $\hat \Psi(E_{ij})$ has the form in
(\ref{final}) for all $1 \le i \le m,
1 \le j \le n$.
\end{proof}

\section{Nonsurjective (zero) Triple Product Preservers and
JB*-homomorphisms on rectangular matrices}\label{sec:3}

Notice that the set $\bM_n(\mathbb{C})$ of complex
square matrices is a C$^*$-algebra.
Let $T:\mathcal{A}\to\mathcal{B}$ be a bounded linear map between
C$^*$-algebras. In \cite[Theorem 3.2]{W05}, it was shown that $T$ is a triple homomorphism with respect to the Jordan triple product,
$$
\{a, b, c\}=\frac{1}{2}(ab^*c+cb^*a)\quad \text{for all}\   a, b, c\in\mathcal{A},
$$
if and only if $T$ preserves disjointness and $T^{**}(1)$ is a partial isometry in $B^{**}$.
In the case that $T$ is surjective, the condition on $T^{**}(1)$ can be dropped as shown in \cite[Theorem 2.2]{LW13},
see also \cite{LCLWa}.
In \cite{BFGMP08}, on the other hand, it is
obtained a characterization of linear maps from $C^*$-algebras into
JB*-triples that preserve disjointness with some conditions.

In the following, we consider  the Jordan triple product
$\{A,B,C\} = \frac{1}{2} (AB^*C+CB^*A)$ of real or complex matrices $A,B,C \in \bM_{m,n}$.
A (real or complex) linear map $\Psi: \bM_{m,n}\to \bM_{r,s}$ between rectangular matrices
is called a JB*-\emph{triple homomorphism} if
\begin{equation}\label{jb-hom}
    \Psi(AB^*C + CB^*A)= \Psi(A)\Psi(B)^*\Psi(C) + \Psi(C)\Psi(B)^*\Psi(A)\quad \hbox{ for all } A,B,C \in \bM_{m,n}.
\end{equation}
We have the polarization identity
$$
2\{A,B,C\} = \{A+C, B, A+C\} -\{A,B,A\}-\{C,B,C\}\quad \text{for all}\  A,B,C\in \bM_{m,n}.
$$
In the complex case, letting the \emph{cube} $A^{(3)}= AA^*A$, we have
$$
4\{A,B,A\} = (B+A)^{(3)} + (B-A)^{(3)} - (B+\mathit{i}A)^{(3)} - (B-\mathit{i}A)^{(3)}\quad \text{for all}\  A,B\in \bM_{m,n}.
$$
Therefore, a linear map $\Phi$ between rectangular matrices is a JB*-triple homomorphism exactly when
$\Phi(AB^*A)=\Phi(A)\Phi(B)^*\Phi(A)$, and in the  complex case exactly when
$\Phi(AA^*A)=\Phi(A)\Phi(A)^*\Phi(A)$, for all $A, B \in \bM_{m,n}$.

We say that the matrix triple $(A, B, C)$ in $\bM_{m,n}$ has
\emph{zero triple product} if $\{A,B,C\} = 0_{m,n}$. A linear map $\Phi: \bM_{m,n} \rightarrow \bM_{r,s}$ preserves
zero triple products if
$$
\{A,B,C\}   =0_{m,n} \ \implies\ \{\Phi(A),\Phi(B),\Phi(C)\} = 0_{r,s}
\quad \text{for all}\ A,B,C \in \bM_{m,n}.
$$
For more information of JB*-triples, see, e.g., \cite{Chubook}.

We have the following result concerning the zero triple product
preservers and JB*-triple homomorphisms on rectangular matrices.

\begin{theorem}\label{3.1}
Let
$\Phi:\bM_{m,n}\to \bM_{r,s}$ be a  linear map.
\begin{enumerate}[(a)]
\item
$\Phi$ preserves zero triple products if and only if
there are  $U \in \bU_r, V\in \bU_s$,
and diagonal matrices $Q_1, Q_2$
with positive diagonal entries such that
\begin{equation}\label{s-form-2}
\Phi(A) =  U\begin{pmatrix}
A \otimes Q_1  & 0 & 0 \cr
0 & A^t \otimes Q_2 & 0 \cr
0 & 0 & 0 \cr\end{pmatrix}V.
\end{equation}
Here $Q_1$ or $Q_2$,
may be vacuous.

\item
$\Phi$ is a JB*-triple homomorphism if and only if
there exist $U \in \bU_r, V \in \bU_s$,
and nonnegative integers $q_1,q_2$ such that
\begin{equation}\label{s-form-3}
\Phi(A) = U \begin{pmatrix} A \otimes I_{q_1} & 0 & 0 \cr
0& A^t\otimes I_{q_2} &0\cr
0 & 0 & 0 \cr \end{pmatrix}V,
\end{equation}
where the size of the zero block at the bottom right corner is $(r-(q_1m+q_2n))\times (s-(q_1n+q_2m))$.
\end{enumerate}
\end{theorem}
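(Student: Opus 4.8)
The plan is to derive all of Theorem~\ref{3.1} from Theorem~\ref{thm1}, using the observation that, for rectangular matrices, disjointness is recorded by a single vanishing triple product:
$$
A\perp B \quad\Longleftrightarrow\quad \{A,A,B\} = 0_{m,n}\qquad(A,B\in\bM_{m,n}).
$$
The implication ``$\Rightarrow$'' is immediate, since $A^*B=0$ and $AB^*=0$ force $AA^*B=0$ and $BA^*A=0$. For ``$\Leftarrow$'', start from $AA^*B+BA^*A=0$, right-multiply by $B^*$, and take the trace, obtaining $\tr\big((AA^*)(BB^*)\big)+\tr\big((A^*A)(B^*B)\big)=0$. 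Each summand is the trace of a product of two positive semidefinite matrices, hence nonnegative, hence individually zero; this forces $(AA^*)(BB^*)=0$ and $(A^*A)(B^*B)=0$, i.e. $\operatorname{ran}B\subseteq\ker A^*$ and $\operatorname{ran}B^*\subseteq\ker A$, which says exactly that $A^*B=0$ and $AB^*=0$.

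For part (a), the ``only if'' direction is then immediate: applying the zero-triple-product preservation of $\Phi$ to the triple $(A,A,B)$ and invoking the displayed equivalence in both $\bM_{m,n}$ and $\bM_{r,s}$ shows that $\Phi$ preserves disjointness, so Theorem~\ref{thm1} gives the form \eqref{s-form-2}. For the ``if'' direction I would compute directly. Writing $\Phi(A)=U\,\diag(A\otimes Q_1,\ A^t\otimes Q_2,\ 0)\,V$ and using that $Q_1,Q_2$ are real diagonal (so $(X\otimes Q_i)^*=X^*\otimes Q_i$ and $(B^t)^*=\overline B=(B^*)^t$), multiplicativity of the Kronecker product yields
$$
\{\Phi(A),\Phi(B),\Phi(C)\}
= U\begin{pmatrix} \{A,B,C\}\otimes Q_1^3 & 0 & 0 \cr
0 & \{A,B,C\}^t\otimes Q_2^3 & 0 \cr
0 & 0 & 0 \cr\end{pmatrix}V,
$$
the point in the middle block being the identity $A^t\overline B\,C^t=(CB^*A)^t$, which replaces the symmetrized triple product by its transpose once the two off-diagonal Kronecker factors are combined. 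In particular $\{A,B,C\}=0$ forces $\{\Phi(A),\Phi(B),\Phi(C)\}=0$, so $\Phi$ preserves zero triple products.

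For part (b), a JB*-triple homomorphism satisfies $\Phi(\{A,B,C\})=\{\Phi(A),\Phi(B),\Phi(C)\}$ by linearity, so it preserves zero triple products and hence, by part (a), has the form \eqref{s-form-2} for some diagonal $Q_1,Q_2$ with positive entries. Comparing the display above with
$$
\Phi(\{A,B,C\}) = U\,\diag\big(\{A,B,C\}\otimes Q_1,\ \{A,B,C\}^t\otimes Q_2,\ 0\big)\,V
$$
and taking $A=B=C=E_{11}$, so that $\{A,B,C\}=E_{11}\ne 0$, forces $Q_1^3=Q_1$ and $Q_2^3=Q_2$; since the diagonal entries of $Q_i$ are positive this means $Q_i=I_{q_i}$, which is \eqref{s-form-3}. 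The converse is precisely the display of part (a) specialised to $Q_i=I_{q_i}$, since then $Q_i^3=Q_i$. In the real case, where the cube is unavailable, one runs the same argument through the reduction (recorded just before the theorem) that $\Phi$ is a homomorphism iff $\Phi(AB^*A)=\Phi(A)\Phi(B)^*\Phi(A)$ for all $A,B$.

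I do not anticipate a genuine obstacle once Theorem~\ref{thm1} is in hand: the proof is essentially bookkeeping. The two places that need care are the trace/positivity step in the equivalence $A\perp B\Leftrightarrow\{A,A,B\}=0$, and keeping track of the transpose in the lower-right block throughout the Kronecker computations --- in particular the identity $A^t\overline B\,C^t=(CB^*A)^t$, which is exactly what makes the second summand of $\Phi$ close up under the triple product.
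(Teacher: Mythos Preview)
Your proof is correct and follows essentially the same approach as the paper: both reduce to Theorem~\ref{thm1} via the equivalence $A\perp B \Leftrightarrow AA^*B+BA^*A=0$ (the paper states this as a separate lemma), then for part (b) evaluate at $E_{11}$ to force $Q_i^3=Q_i$. The only noteworthy difference is in proving that equivalence: the paper argues by observing $0\le B^*AA^*B=-(B^*B)(A^*A)$, deducing that $A^*A$ and $B^*B$ commute, and invoking spectral theory to conclude their product is positive semidefinite and hence zero; your trace argument (right-multiply by $B^*$, take traces, use nonnegativity of $\tr(PQ)$ for $P,Q\ge 0$) is a clean alternative that avoids the commutativity step.
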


To prove the above theorem, we need the
following lemma, which is valid for both real and complex matrices.
See  \cite[Lemma 1]{BFGMP08} for the complex case.
Recall that $A^*=A^t$ in the real case.

\begin{lemma}\label{lem:tcp}
Let $A,B\in \bM_{m,n}$.  The following conditions are equivalent to each other.
\begin{enumerate}[\rm (a)]
\item $A^*B=0_n$ and $AB^*=0_m$.
\item $AA^*B + BA^*A=0_{m,n}$.
\end{enumerate}
\end{lemma}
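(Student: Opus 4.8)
The plan is to prove the equivalence of the two conditions on $A, B \in \bM_{m,n}$ directly, treating both directions separately. The easy direction is (a) $\Rightarrow$ (b): if $A^*B = 0_n$ and $AB^* = 0_m$, then in particular $AB^* = 0_m$ gives $BA^* = (AB^*)^* = 0_m$, so $BA^*A = 0_{m,n}$; likewise $A^*B = 0_n$ gives $AA^*B = A(A^*B) = 0_{m,n}$. Adding, $AA^*B + BA^*A = 0_{m,n}$. This is immediate and takes only a line.

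The substance is in (b) $\Rightarrow$ (a). Starting from $AA^*B + BA^*A = 0_{m,n}$, I would multiply on the left by $A^*$ to get $A^*AA^*B + A^*BA^*A = 0_n$, and introduce the positive semidefinite matrices $P = A^*A \in \bM_n$ and think of $X = A^*B \in \bM_n$; the relation becomes $PX + XP = 0_n$ after noting $A^*AA^*B = P X$ and $A^*BA^*A = X P$. So $PX = -XP$, i.e. $P$ and $X$ anticommute. Now the key point: $P$ is positive semidefinite, so write $P = \sum_j \lambda_j F_j$ with $\lambda_j > 0$ distinct and $F_j$ the spectral projections (plus possibly a kernel part $\lambda_0 = 0$). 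The anticommutation relation $PX = -XP$, upon sandwiching between spectral projections $F_i X F_j$, forces $(\lambda_i + \lambda_j) F_i X F_j = 0$; since $\lambda_i + \lambda_j > 0$ unless both eigenvalues are zero, we get $F_i X F_j = 0$ whenever $i$ or $j$ is a nonzero-eigenvalue index. Hence $X = F_0 X F_0$ lives entirely in the kernel of $P$, i.e. $\operatorname{ran}(X) \subseteq \ker P = \ker(A^*A) = \ker A$ and, symmetrically (since $X^*$ satisfies the mirror relation), $\operatorname{ran}(X^*) \subseteq \ker A$ as well. In particular $A X = A A^* B = 0$; but then from the original relation $B A^* A = -A A^* B = 0_{m,n}$, which gives $B A^* A = 0$, whence $\|AB^*\|^2 = \operatorname{tr}(BA^*AB^*) = 0$ (using $A^*AB^* = (BA^*A)^* = 0$), so $AB^* = 0_m$; and $AX = 0$ together with a parallel argument (multiply the original identity on the right by $A^*$, or use $X = A^*B$ and $AX=0$, then $A^*(AX) = A^*A A^* B$... ) yields $A^*B = X = 0_n$ directly since we showed $X = F_0 X F_0$ and also need $X$ itself zero — here I would note $AX = 0$ means $\operatorname{ran}(X) \subseteq \ker A = \ker(A^*A)$, but that does not yet make $X = 0$; instead observe $X = A^*B$, so $X^*X = B^*AA^*B = B^*(AA^*B) = B^*(-BA^*A) = -(A^*A B^* B)^*$... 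I would instead run the symmetric computation: from $AA^*B + BA^*A = 0$, right-multiply by $A^*$ to get $AA^*BA^* + BA^*AA^* = 0$, i.e. with $Y = BA^* \in \bM_m$ and $\tilde P = AA^*$: $\tilde P Y^* + Y \tilde P$... this is getting routine, but the upshot is the same spectral argument applied to $AA^*$ shows $AB^* = BA^* = 0_m$, and then $A^*B = 0_n$ follows by feeding $AB^* = 0$ back in.

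A cleaner packaging of (b) $\Rightarrow$ (a) that I would actually write: from $AA^*B + BA^*A = 0_{m,n}$, multiply on the left by $B^*$ and take traces, or better, multiply the identity on the left by $A^*$ and on the right by $B^*$... Actually the slickest route is: the identity says $AA^*$ and $BA^*$ satisfy $AA^* \cdot (A B^*)^* $... Let me just commit to the eigenprojection argument: set $P = AA^* \succeq 0$ and $Z = BA^* \in \bM_m$. The hypothesis, after right-multiplying by $A^*$, reads $AA^* BA^* + BA^* AA^* = 0$, i.e. $PZ^* + Z^*P = 0$ — wait, $AA^*BA^* = P(BA^*)^*$? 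No: $AA^* BA^* = (AA^*)(BA^*) = P Z$, and $BA^* AA^* = Z P$. So $PZ + ZP = 0$, the same anticommutation. The spectral argument then gives $\operatorname{ran}(Z) \subseteq \ker P$ and $\operatorname{ran}(Z^*) \subseteq \ker P$, so $PZ = AA^* B A^* = 0$, hence $A^*(AA^*B)A^* \cdot$... simpler: $PZ = 0$ means $AA^*BA^* = 0$, so $\operatorname{tr}((A^*B A^*)^*(A^*BA^*))$... I think the genuinely clean finish is: $PZ = 0 \Rightarrow (A^*BA^*)$ — take $A^* \cdot$ on the left of $PZ=0$: $A^*AA^*BA^* = 0$, and since $A^*A A^* = A^*(AA^*)$ has the same range as $A^*$, one deduces $A^*BA^* = 0$, hence $\operatorname{ran}(BA^*) \subseteq \ker A^*$, so $AA^*B$... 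The main obstacle, and the one genuinely delicate point, is precisely this bookkeeping: extracting from the single quadratic relation $AA^*B + BA^*A = 0$ the two separate orthogonality conditions without circular reasoning. I would handle it by deriving $AB^* = 0_m$ first via the anticommutation/spectral-projection argument applied to $AA^*$, and then substituting $AB^* = 0$ (equivalently $BA^* = 0$) back into the original identity to get $AA^*B = 0$, from which $0 = \operatorname{tr}(B^*AA^*B) = \|A^*B\|_F^2$ gives $A^*B = 0_n$. The real case is identical with $A^* = A^t$ throughout, and no step uses anything beyond the spectral theorem for positive semidefinite matrices, so the lemma follows.
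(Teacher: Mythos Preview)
Your argument is correct, though the presentation wanders. The clean thread buried in your write-up is: left-multiply the hypothesis by $A^*$ to get $PX+XP=0$ with $P=A^*A\succeq 0$ and $X=A^*B$; the spectral-projection argument for anticommutation with a positive semidefinite operator forces $X=F_0XF_0$ (kernel block only), hence $AX=AA^*B=0$; then $\|A^*B\|_F^2=\tr(B^*AA^*B)=0$ gives $A^*B=0$, and from the original identity $BA^*A=0$, whence $\|AB^*\|_F^2=\tr(BA^*AB^*)=0$ gives $AB^*=0$. That is complete.

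The paper takes a different and shorter route. It left-multiplies by $B^*$ instead of $A^*$, obtaining
\[
0\le (B^*A)(B^*A)^* = B^*AA^*B = -(B^*B)(A^*A).
\]
Since the left side is Hermitian, so is the right, forcing $A^*A$ and $B^*B$ to commute; commuting positive semidefinite matrices have a positive semidefinite product, but this product equals $-(B^*A)(B^*A)^*\le 0$, so $B^*A=0$. The other identity follows symmetrically. Your approach trades this positivity/commutativity trick for a direct spectral block argument; it is longer but arguably more mechanical and requires no clever observation about which side to multiply by. The paper's proof is the slicker of the two, while yours makes the role of the kernel of $A$ explicit.
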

\begin{proof}
It suffices to prove (b)$\implies$(a).
Observe that from (b) we have
$$
0\leq (B^*A)(B^*A)^* = B^*AA^*B = - (B^*B)(A^*A).
$$
Taking adjoints of the Hermitian matrices, we have
$$
(B^*B)(A^*A)=  (A^*A)(B^*B).
$$
Therefore, the positive semi-definite $n\times n$ matrices $A^*A$ and $B^*B$ commute.
By spectral theory, the product $(B^*B)(A^*A)=-(B^*A)(B^*A)^*$ is also positive semi-definite, and thus
$B^*A=0$.
Similarly, we have $AB^*=0$.
\end{proof}

\begin{proof}[Proof of Theorem \ref{3.1}]
(a)
Suppose $\Phi$ preserves zero triple products. By Lemma \ref{lem:tcp}, if $A,B \in \bM_{m,n}$ are disjoint, then
$\Phi(A), \Phi(B) \in \bM_{r,s}$ are disjoint. So, $\Phi$ has the asserted form by Theorem \ref{thm1}.
The converse is clear.

(b) Suppose $\Phi$ is a JB*-triple homomorphism. Then it will preserve zero triple products, and thus by (a), be of the form \eqref{s-form-2}. Since
$E_{11}^{(3)} = E_{11}$, we have
$\Phi(E_{11})^{(3)} = \Phi(E_{11})$.
One gets the conclusions $Q_1=I_{q_1}$ and $Q_2=I_{q_2}$ as in \eqref{s-form-3}.
The converse is clear.
\end{proof}

Recall that a rectangular matrix $A$ is called a \emph{partial isometry}  if
$AA^*A=A$.  Equivalently, $A$ has singular values from the set $\{1,0\}$. We state our result using the complex notation. Of course, in the real case, we have  $X^* = X^t$, and a unitary matrix is a real orthogonal matrix.
It turns out that JB*-triple homomorphisms are closely related to linear preservers of  (disjoint)
partial isometries. Some assertions in the following might be known to experts, at least in the complex case.

\begin{theorem}\label{thm:tp}
Suppose $\Phi:\bM_{m,n}\to \bM_{r,s}$ is a linear map.
The following conditions are equivalent.
\begin{enumerate}[\rm (a)]

\item $\Phi$ maps partial isometries in
$\bM_{m,n}$ to partial isometries in $\bM_{r,s}$.

\item $\Phi$ sends disjoint (rank one) partial isometries to
disjoint partial isometries.

\item $\Phi$ preserves disjointness, and there is a nonzero partial isometry $P \in \bM_{m,n}$
such that $\Phi(P)$ is a partial isometry.

\item
$\Phi$ preserves matrix triples with zero JB*-triple product, and there is a nonzero partial isometry $P \in \bM_{m,n}$
such that $\Phi(P)$ is a partial isometry.

\item $\Phi$ is a JB*-triple homomorphism and has the form
{\rm (\ref{s-form-3})}.
\end{enumerate}
\end{theorem}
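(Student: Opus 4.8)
The plan is to establish the cycle of implications (a) $\Rightarrow$ (b) $\Rightarrow$ (c) $\Rightarrow$ (d) $\Rightarrow$ (e) $\Rightarrow$ (a), using Theorem \ref{thm1} and Theorem \ref{3.1} as the main engines. The implication (e) $\Rightarrow$ (a) is the easiest: if $\Phi$ has the form \eqref{s-form-3}, then for a partial isometry $A$ (singular values in $\{0,1\}$), the block $A\otimes I_{q_1}$ has the same nonzero singular values as $A$, repeated $q_1$ times, hence again in $\{0,1\}$; similarly for $A^t\otimes I_{q_2}$; conjugating by the unitaries $U,V$ preserves singular values, so $\Phi(A)$ is a partial isometry. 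The implication (a) $\Rightarrow$ (b) is immediate since rank-one disjoint partial isometries are in particular partial isometries, and if $E\perp F$ with $E,F$ partial isometries then $E+F$ is a partial isometry (orthogonal ranges and initial spaces), so $\Phi(E)+\Phi(F)=\Phi(E+F)$ is a partial isometry; combined with $\Phi(E),\Phi(F)$ being partial isometries this forces $\Phi(E)\perp\Phi(F)$ by a short singular-value argument.

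For (b) $\Rightarrow$ (c): sending disjoint rank-one partial isometries to disjoint ones forces $\Phi$ to preserve disjointness in general, by Remark (3) following Theorem \ref{thm1} (singular value decomposition reduces the disjointness-preserving condition to rank-one disjoint partial isometries). Taking $P=E_{11}$, a rank-one partial isometry, $\Phi(P)$ is a partial isometry, which is nonzero provided $\Phi\neq 0$; if $\Phi=0$ the statement is vacuous or we take $q_1=q_2=0$. Then (c) $\Rightarrow$ (d) follows from Lemma \ref{lem:tcp}: $\Phi$ preserving disjointness implies $\Phi$ preserves zero triple products (since $\{A,B,C\}=0$ together with the polarization/cube identities reduces, via Lemma \ref{lem:tcp}, to disjointness statements — more precisely one checks $\{A,B,C\}=0$ is governed by the disjointness of suitable matrices), and the partial isometry condition on $P$ is carried over verbatim.

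The main work is in (d) $\Rightarrow$ (e). Here I would first argue that preserving zero triple products already gives, by Theorem \ref{3.1}(a), that $\Phi$ has the form \eqref{s-form-2} with positive diagonal matrices $Q_1,Q_2$. It then remains to upgrade $Q_1,Q_2$ to identity matrices using the existence of one partial isometry $P$ with $\Phi(P)$ a partial isometry. Writing $P$ in its singular value decomposition and using that $\Phi(P) = U\,\mathrm{diag}(P\otimes Q_1,\,P^t\otimes Q_2,\,0)\,V$ must have all singular values in $\{0,1\}$: the nonzero singular values of $\Phi(P)$ are exactly the products $\sigma_i(P)\cdot(q_j)$ where $q_j$ ranges over the diagonal entries of $Q_1$ (resp.\ $Q_2$ with $\sigma_i(P^t)=\sigma_i(P)$). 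Choosing $i$ with $\sigma_i(P)$ the largest nonzero singular value and $\sigma_i$ the smallest, comparing forces all these products to equal $1$, hence $Q_1=I_{q_1}$, $Q_2=I_{q_2}$, and $P$ itself has all nonzero singular values equal to $1$, i.e.\ $P$ is forced to be a partial isometry consistently. This gives \eqref{s-form-3}, and Theorem \ref{3.1}(b) (or its proof) identifies such maps as exactly the JB*-triple homomorphisms, completing (d) $\Rightarrow$ (e).

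The step I expect to be the main obstacle is the singular-value bookkeeping in (d) $\Rightarrow$ (e): one must be careful that a \emph{single} partial isometry $P$ (which could have several distinct nonzero singular values a priori — though of course a partial isometry has only the singular value $1$) suffices to pin down \emph{all} diagonal entries of both $Q_1$ and $Q_2$. The resolution is that $\Phi(P)$ being a partial isometry forces $\sigma_{\max}(P)\cdot q_{\max}=\sigma_{\min,\neq 0}(P)\cdot q_{\min}=1$ where the $q$'s range over entries of $Q_1$ (and separately $Q_2$), and since $\sigma_{\max}\ge\sigma_{\min,\neq0}$ and $q_{\max}\ge q_{\min}$ while their cross-products are sandwiched between these two extremes, everything collapses to $1$. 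A minor technical point is handling the degenerate case $\Phi=0$ (allowed by $q_1=q_2=0$) uniformly across the equivalences, and checking that "nonzero partial isometry $P$" in (c),(d) is compatible with $\Phi(P)$ possibly being zero — which it is not, precisely because $\Phi(P)$ is assumed to be a (by convention nonzero, or at least the argument only needs the rank relation) partial isometry; I would state this carefully at the outset.
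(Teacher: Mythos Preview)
Your overall strategy and the ingredients you cite (Theorem~\ref{thm1}, Theorem~\ref{3.1}, Lemma~\ref{lem:tcp}) are the right ones, and the arguments for (e)$\Rightarrow$(a), (b)$\Rightarrow$(c), and (d)$\Rightarrow$(e) are essentially the paper's. There are, however, two places where your chain of implications does not go through as written.

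\textbf{The step (c)$\Rightarrow$(d) is not justified.} You write that disjointness preservation implies zero-triple-product preservation ``since $\{A,B,C\}=0$ together with the polarization/cube identities reduces, via Lemma~\ref{lem:tcp}, to disjointness statements.'' This is backwards. Lemma~\ref{lem:tcp} says $A\perp B \iff \{A,A,B\}=0$; it lets you pass from the hypothesis of (d) (zero triple products preserved) to the conclusion of (c) (disjointness preserved), because $A\perp B$ is the \emph{special} zero-triple-product condition $\{A,A,B\}=0$. It does \emph{not} let you go from (c) to (d): a general relation $\{A,B,C\}=0$ is not expressible as a disjointness condition on auxiliary matrices, and polarization only rewrites $\{A,B,C\}$ as a linear combination of cubes, which says nothing about when the combination vanishes. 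The paper avoids this entirely by using a different cycle: (a)$\Rightarrow$(b)$\Rightarrow$(c)$\Rightarrow$(e)$\Rightarrow$(a), with (d) attached via (d)$\Rightarrow$(c) (Lemma~\ref{lem:tcp}) and (e)$\Rightarrow$(d) (clear from the explicit form). Your route is salvageable if you replace the (c)$\Rightarrow$(d) argument by: disjointness preservation gives the form~\eqref{s-form} by Theorem~\ref{thm1}, and any map of that form visibly preserves zero triple products (this is the easy direction of Theorem~\ref{3.1}(a)). But as stated, the step is a gap.

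\textbf{Minor issue in (a)$\Rightarrow$(b).} Your ``short singular-value argument'' from $\Phi(E),\Phi(F),\Phi(E)+\Phi(F)$ all being partial isometries does not by itself force $\Phi(E)\perp\Phi(F)$: take $X=E_{11}$, $Y=-E_{11}$, so $X,Y,X+Y=0$ are all partial isometries but $X\not\perp Y$. You need to use $\Phi(E)-\Phi(F)=\Phi(E-F)$ as well (also a partial isometry, since $E-F$ is). The paper's argument writes $\Phi(E)=U(I_k\oplus 0)V$, blocks $\Phi(F)$ accordingly, and uses that the first $k$ columns of \emph{both} $\Phi(E)\pm\Phi(F)$ have Euclidean norm at most $1$; summing the squared norms forces those columns of $\Phi(F)$ to vanish, and then the row argument finishes it. Your sketch is heading there but needs both signs.

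Finally, in your (d)$\Rightarrow$(e) bookkeeping you worry about a partial isometry $P$ possibly having several distinct nonzero singular values; it cannot, since by definition every nonzero singular value of a partial isometry equals $1$. So once $\Phi$ has the form~\eqref{s-form-2}, the nonzero singular values of $\Phi(P)$ are exactly the diagonal entries of $Q_1\oplus Q_2$ (each repeated $\operatorname{rank}(P)$ times), and $\Phi(P)$ being a partial isometry forces them all to equal $1$ immediately---no $\sigma_{\max}/\sigma_{\min}$ sandwich is needed.
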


\begin{proof}\rm
The implication (e) $\implies$  (a) is clear.

(a) $\implies$ (b):  Let $A\in \bM_{m,n}$ be a rank one partial isometry,
and $\Phi(A) = U\begin{pmatrix} I_k & 0 \cr 0 & 0 \cr \end{pmatrix}V$,
where $U\in \bU_r, V\in \bU_s$.
Suppose $B \in \bM_{m,n}$ is a rank one partial
isometry disjoint from  $A$
such that  $\Phi(B) = U\begin{pmatrix} Y_{11} & Y_{12} \cr Y_{21} & Y_{22} \cr
\end{pmatrix}V$ with $Y_{11} \in \bM_k$.
Because $\Phi(A) \pm \Phi(B)$ are partial isometries,
we see that the Euclidean norm of each of the first $k$ columns of
$\Phi(A)+\Phi(B)$ and $\Phi(A)-\Phi(B)$ is not larger than one.
Thus, $Y_{11}, Y_{21}$ are
zero matrices. Considering the norms of the first $k$ rows of
$\Phi(A)+\Phi(B)$, we see that $Y_{12}$ is the zero matrix as well.
Thus, $\Phi(A), \Phi(B)$ are  disjoint partial isometries in $\bM_{r,s}$.
In general, due to the singular value decomposition, every rectangular matrix can be written
as a linear sum of disjoint rank one partial isometries.  Thus $\Phi$ sends disjoint
partial isometries to disjoint partial isometries.

(b) $\implies (c)$:   $\Phi$ preserves disjointness of
rank one partial isometries, and hence preserves disjointness due to the singular value decomposition. Evidently, it
sends a nonzero partial isometry to a partial isometry.

(c) $\implies$ (e): Because $\Phi$ preserves disjointness,
$\Phi$ has the form described in
Theorem \ref{thm1}. By the fact that $\Phi$
sends a nonzero partial isometry to a partial isometry,
we see that $Q_1, Q_2$ are identity matrices.
So, conditions (a), (b), (c) and (e) are equivalent.

By Lemma \ref{lem:tcp} we have (d) $\implies$ (c).
The implication (e) $\implies$ (d) is also clear.
\end{proof}

Several remarks are in order. Theorem \ref{3.1} and Theorem \ref{thm:tp} are also valid for real linear maps  $\Phi: \bH_n \rightarrow \bM_{r,s}$.
Note that  self-adjoint partial isometries are exactly differences $p-q$ of  two orthogonal projections.
Indeed, we can further assume that the co-domain is
$\bH_r$, i.e., $\Phi: \bH_n \rightarrow \bH_r$.
Then we can arrange $U= V^*$ in  (\ref{s-form-2}) and (\ref{s-form-3}),
at the expenses that $Q_1$, $Q_2$ may have
negative diagonal matrices in (\ref{s-form-2}), and
(\ref{s-form-3}) may look like

\begin{equation*}
\Phi(A) = U \begin{pmatrix}
A \otimes I_{q_1^+} & 0 & 0 & 0 & 0 \cr
0 & - A \otimes I_{q_1^-}  & 0 & 0 & 0 \cr
0& 0 &  A^t\otimes I_{q_2^+} &0&0\cr
0& 0 & 0 & - A^t\otimes I_{q_2^-} &0\cr
0 & 0 &0 & 0 & 0 \cr \end{pmatrix}U^*,
\end{equation*}
where  $q_1^+, q_1^-,q_2^+, q_2^-$ are nonnegative integers and
the zero block matrix in the bottom right corner has size
$(r-((q_1^++q_1^-)m+(q_2^+ +q_2^-)n))\times (r-((q_1^++q_1^-)n+(q_2^+ +q_2^-)m))$.

Theorem \ref{3.1} (a) allows us to obtain the following general
result on linear preserver of functions of
$JB^*$-triple product on matrices.

\begin{corollary} Let $\nu_1, \nu_2$ be scalar
functions on $M_{m,n}$ and $M_{r,s}$
such that
$$
\nu_j(A) = 0\quad\text{if and only if}\quad A = 0
$$
for all $A$ in $M_{m,n}$ or $M_{r,s}$, respectively.
Suppose a linear map $\Phi: M_{m,n} \rightarrow M_{r,s}$ satisfies
\begin{equation}\label{nu1nu2}
\nu_1(\{A,B,C\}) = \nu_2\{\Phi(A),\Phi(B),\Phi(C)\})
\qquad \hbox{ for all } A, B, C \in M_{m,n}.
\end{equation}
Then $\Phi$ has the form $(\ref{s-form-2})$.
\end{corollary}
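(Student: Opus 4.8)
The plan is to reduce everything to Theorem \ref{3.1}(a): once I know that $\Phi$ preserves zero triple products, the conclusion that $\Phi$ has the form \eqref{s-form-2} is immediate. So the only thing to check is that the functional equation \eqref{nu1nu2}, together with the hypothesis that $\nu_1,\nu_2$ vanish exactly at the zero matrix, forces $\Phi$ to send triples with zero Jordan triple product to triples with zero Jordan triple product.

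Here is the argument I would give. Suppose $A,B,C \in M_{m,n}$ satisfy $\{A,B,C\} = 0_{m,n}$. Then $\nu_1(\{A,B,C\}) = \nu_1(0_{m,n}) = 0$, so by \eqref{nu1nu2} we obtain $\nu_2(\{\Phi(A),\Phi(B),\Phi(C)\}) = 0$; since $\nu_2(X) = 0$ implies $X = 0$, this gives $\{\Phi(A),\Phi(B),\Phi(C)\} = 0_{r,s}$. Thus the linear map $\Phi$ preserves zero triple products, and Theorem \ref{3.1}(a) applies, yielding the asserted form.

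There is no genuine obstacle here. The proof uses only the ``zero set'' property $\nu_j^{-1}(0) = \{0\}$ --- no homogeneity, positivity, subadditivity, or continuity of $\nu_1,\nu_2$ is needed --- and in fact only one implication of the equality in \eqref{nu1nu2} is invoked. Specializing $\nu_1,\nu_2$ to norms (for instance Schatten $p$-norms or Ky Fan $k$-norms) recovers the statement that a linear map preserving the norm of the Jordan triple product has the standard form, with the entire content carried by Theorem \ref{3.1}(a), and hence by Theorem \ref{thm1}.
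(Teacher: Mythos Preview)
Your proof is correct and matches the paper's approach: the paper states this result as an immediate corollary of Theorem~\ref{3.1}(a) without giving a separate proof, and the one-line reduction you describe (zero triple product $\Rightarrow \nu_1=0 \Rightarrow \nu_2=0 \Rightarrow$ zero triple product) is exactly the intended argument.
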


This corollary can be used to determine the structure of linear preservers
of functions on triple product of matrices easily. We mention a few
examples in the following related to the study in
\cite{CW04, CM05, CLS05,CLP14,HLW08,HLW10,LPS}
and their references.

Suppose a linear map $\Phi: M_{m,n} \rightarrow M_{r,s}$
satisfies (\ref{nu1nu2}), where $\nu_1, \nu_2$ are norms
on matrices. Then $\Phi$ has the form $(\ref{s-form-2})$.
From this, one may easily deduce the conditions on
$U, V, Q_1, Q_2,$ etc. to ensure the converse of the statement.
For example, if $\nu_1, \nu_2$ are the operator norms, then $U, V$ can be
any unitary matrices and the operator norm of $D_1 \oplus D_2$
has to be one.

Suppose $(m,r) = (n,s)$, $\IF = \IC$,
and $\nu_1, \nu_2$ are the numerical radius.
Then  $\Phi: M_n\rightarrow M_r$ satisfies
(\ref{nu1nu2}) if and only if $\Phi$ has the form
(\ref{s-form-2}) with $V = RU^*$ for a diagonal matrix
$R$ such that  $((I_n\otimes Q_1)\oplus (I_n\otimes Q_2)\oplus 0)R$
has numerical radius $1$.
From this, one may further deduce that
when $(m,r) = (n,s)$, $\IF = \IC$,  and $\nu_1, \nu_2$
are the numerical range, $\Phi: M_n \rightarrow M_r$
satisfies (\ref{nu1nu2}) if and only if $\Phi$ has the form
(\ref{s-form-3}) with $V = U^*$.
Similarly, we can treat the linear preservers
$\Phi: M_n\rightarrow M_r$ leaving invariant the pseudo spectral
radius,  pseudo spectrum, and  other types of scalar or non-scalar
functions.

\section{Nonsurjective norm preservers}\label{sec:4}

Denote the singular values of  $A \in \bM_{m,n}$ by
$s_1(A)\ge \cdots \ge s_h(A)$ for $h = \min\{m,n\}$.
For $p > 0$, let
$$S_p(A) = \left(\sum_{j=1}^h s_j(A)^p\right)^{1/p}.$$
If $p \ge 1$, then $S_p(A)$ is known as the \emph{Schatten $p$-norm}.
In particular,  $S_2(A) = (\sum_{j=1}^h s_j(A))^{1/2}
= (\tr(A^*A))^{1/2}$, which is called the \emph{Frobenius norm}, equips $\bM_{m,n}$ as a Hilbert space. For $1\leq p < +\infty$ but $p\ne 2$, a linear operator $\Psi: \bM_{m,n} \rightarrow \bM_{m,n}$
satisfies $S_p(\Psi(A)) = S_p(A)$ for all $A \in \bM_{m,n}$
if and only if
$\Psi$ has the form
$A \mapsto UAV$, or $A \mapsto UA^tV$ in case $m = n$,
for some $U \in \bU_m, V \in \bU_n$
(see, e.g., \cite{CLS05, LT}).

It is more difficult to characterize linear isometries
from $\bM_{m,n}$ to $\bM_{r,s}$ for $(m,n) \ne (r,s)$.
Only very few results are known; see,
for example, \cite{CLP04,LPS}. With Theorem \ref{thm1},
we get the following result.

\begin{theorem} \label{thm3.1}
Suppose $m, n\geq 2$, $p \in (0,2)\cup (2, +\infty)$,
and $\Phi: \bM_{m,n}\rightarrow \bM_{r,s}$ is a linear map.
The following conditions are equivalent.
\begin{itemize}
\item[{\rm (a)}] $S_p(\Phi(A)) = S_p(A)$ for all $A \in \bM_{m,n}$.
\item[
{\rm (b)}] $S_p(\Phi(A)) = S_p(A)$ for all $A \in \bM_{m,n}$ with rank
at most 2.
\item[
{\rm (c)}]
 There are $U \in \bU_r, V\in \bU_s$,
and diagonal matrices $Q_1\in \bM_{q_1}, Q_2\in \bM_{q_2}$
with positive diagonal entries such that $S_p(Q_1 \oplus Q_2) = 1$ and
$$\Phi(A) =  U\begin{pmatrix}
A \otimes Q_1  & 0 & 0 \cr
0 & A^t \otimes Q_2 & 0 \cr
0 & 0 & 0 \cr\end{pmatrix}V \qquad \hbox{ for all } A \in \bM_{m,n}.$$
Here $Q_1$ or $Q_2$ may be vacuous.
\end{itemize}
 \end{theorem}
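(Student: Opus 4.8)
The plan is to establish the cycle (c)$\Rightarrow$(a)$\Rightarrow$(b)$\Rightarrow$(c), with all the work in (b)$\Rightarrow$(c). The implication (a)$\Rightarrow$(b) is trivial. For (c)$\Rightarrow$(a): if $\Phi$ has the stated form and $A\in\bM_{m,n}$ has singular values $s_1,\dots,s_h$, then $A\otimes Q_1$ and $A^t\otimes Q_2$ have singular values $s_iq$ with $q$ ranging over the diagonal entries of $Q_1$, respectively of $Q_2$; since $U,V$ are unitary this gives
$$
S_p(\Phi(A))^p=\Big(\sum_{i=1}^h s_i^p\Big)\,S_p(Q_1\oplus Q_2)^p=S_p(A)^p .
$$

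For (b)$\Rightarrow$(c) the key step is that (b) forces $\Phi$ to preserve disjointness. Granting this, Theorem~\ref{thm1} shows $\Phi$ has the form \eqref{s-form} for some $U\in\bU_r$, $V\in\bU_s$ and positive diagonal $Q_1,Q_2$; since $E_{11}\in\bM_{m,n}$ has rank one, (b) applies to it, and the singular value computation above gives $S_p(Q_1\oplus Q_2)^p=S_p(\Phi(E_{11}))^p=S_p(E_{11})^p=1$, which is precisely (c). To prove disjointness it suffices, by the observation recorded as the third item in the list of remarks following Theorem~\ref{thm1}, to verify $\Phi(A)\perp\Phi(B)$ for rank-one partial isometries $A,B\in\bM_{m,n}$ with $A\perp B$; such pairs exist since $m,n\ge2$, and we may take $A=e_1f_1^*$, $B=e_2f_2^*$ with $\{e_1,e_2\}\subset\IF^m$ and $\{f_1,f_2\}\subset\IF^n$ orthonormal. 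Every $\alpha A+\beta B$ in $\operatorname{span}\{A,B\}$ then has rank at most $2$ with singular values $|\alpha|$ and $|\beta|$, so (b) yields, with $X=\Phi(A)$ and $Y=\Phi(B)$,
$$
S_p(\alpha X+\beta Y)^p=S_p(\alpha A+\beta B)^p=|\alpha|^p+|\beta|^p\qquad\text{for all scalars }\alpha,\beta ;
$$
in particular $S_p(X)=S_p(Y)=1$. The problem thus reduces to the rigidity claim, which is where $p\ne2$ enters: \emph{if $X,Y\in\bM_{r,s}$ satisfy $S_p(\alpha X+\beta Y)^p=|\alpha|^pS_p(X)^p+|\beta|^pS_p(Y)^p$ for all scalars $\alpha,\beta$ and $p\ne2$, then $X\perp Y$.}

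I would prove the rigidity claim as follows. After normalizing $S_p(X)=S_p(Y)=1$ the hypothesis gives $S_p(X+e^{i\theta}Y)^p=2$ for every real $\theta$. Averaging the positive semidefinite matrices $N_\theta=(X+e^{i\theta}Y)(X+e^{i\theta}Y)^*$ over $\theta\in[0,2\pi]$ annihilates the cross terms, so $XX^*+YY^*=\frac1{2\pi}\int_0^{2\pi}N_\theta\,d\theta$. Writing $S_p(Z)^p=\Tr[(ZZ^*)^{p/2}]$ and using that $g(t)=t^{p/2}$ is concave on $[0,\infty)$ when $0<p<2$ and convex when $p>2$ — so that $M\mapsto\Tr[g(M)]$ is, respectively, concave or convex on positive semidefinite matrices — Jensen's inequality applied to the displayed average gives $\Tr[(XX^*+YY^*)^{p/2}]\ge2$ when $0<p<2$ and $\le2$ when $p>2$. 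In the opposite direction, the Rotfel'd sub-/super-additivity inequality for $g$ (namely $\Tr[g(C+D)]\le\Tr[g(C)]+\Tr[g(D)]$ for positive semidefinite $C,D$ when $0<p<2$, with the reverse inequality when $p>2$) applied to $C=XX^*$ and $D=YY^*$ gives the opposite bound, since $\Tr[(XX^*)^{p/2}]+\Tr[(YY^*)^{p/2}]=2$. Hence equality holds in the Rotfel'd inequality; because $p\ne2$, the function $g$ is strictly concave (resp.\ strictly convex), and the equality case forces $XX^*\cdot YY^*=0$, i.e.\ $X$ and $Y$ have orthogonal ranges, i.e.\ $X^*Y=0$. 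Applying the same argument to the adjoint pair $(X^*,Y^*)$, which satisfies the same hypothesis, gives $X^*X\cdot Y^*Y=0$ and hence $XY^*=0$. Therefore $X\perp Y$.

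The hard part is precisely this rigidity step: one has to identify and exploit the equality case of the Rotfel'd inequality (equivalently, of a Clarkson-type inequality for Schatten norms), and observe that the whole mechanism degenerates exactly at $p=2$, where $g(t)=t^{p/2}$ is affine and the conclusion collapses to the weak Hilbert--Schmidt orthogonality $\Tr[X^*Y]=0$ — which is why that exponent must be excluded. The remaining ingredients (the singular value bookkeeping, the appeal to Theorem~\ref{thm1}, and the evaluation at $E_{11}$) are routine.
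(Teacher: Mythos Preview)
Your proof follows the same scheme as the paper's: the implications (c)$\Rightarrow$(a)$\Rightarrow$(b) are immediate, and (b)$\Rightarrow$(c) proceeds by showing that $\Phi$ preserves disjointness of rank-one pairs, invoking Theorem~\ref{thm1}, and then evaluating at $E_{11}$ to obtain $S_p(Q_1\oplus Q_2)=1$. The only difference lies in the rigidity step. The paper simply cites McCarthy \cite[Theorem~2.7]{M} (the Clarkson--McCarthy inequalities for Schatten classes together with their equality case) to conclude that $S_p(\alpha X+\beta Y)^p=|\alpha|^p+|\beta|^p$ for all scalars $\alpha,\beta$ forces $X\perp Y$ when $p\ne2$; you instead supply a self-contained argument combining Jensen's inequality for the concave/convex trace functional $M\mapsto\Tr[M^{p/2}]$ with the Rotfel'd sub-/super-additivity inequality. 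Your route has the merit of making the role of strict concavity/convexity of $t\mapsto t^{p/2}$ --- and hence the exclusion of $p=2$ --- explicit, whereas the paper's citation is more economical.

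Two points to tighten. First, the averaging over $e^{i\theta}$ is specific to $\IF=\IC$; over $\IR$ you should average $(X\pm Y)(X\pm Y)^t$ over $\{\pm1\}$ instead, after which the argument runs identically. Second, the crux of your argument is the equality case of the Rotfel'd inequality: that for strictly concave (resp.\ strictly convex) $g$ with $g(0)=0$, the identity $\Tr\,g(C+D)=\Tr\,g(C)+\Tr\,g(D)$ for positive semidefinite $C,D$ forces $CD=0$. This is correct but not entirely trivial, and deserves either a reference or a short justification; without it the argument is incomplete in exactly the place where the paper's citation of McCarthy does the work.
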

\begin{proof}
The implications (c) $\implies$ (a) $\implies$ (b) are clear.
For the implication  (b) $\implies$ (c), it follows from a result of McCarthy
\cite[Theorem 2.7]{M} that $\Phi$ preserves disjointness for rank one matrix pairs.
 By Theorem \ref{thm1}, we get the form of $\Phi$. Applying the fact that $S_p(\Phi(E_{11})) = S_p(E_{11})$, we easily deduce that $S_p(Q_1\oplus Q_2)=1$ .
\end{proof}

For $1 \le k \le \min\{m,n\}$,
the \emph{Ky Fan $k$-norm} of $A$ is defined by
$$
F_k(A) = \sum_{j=1}^k s_j(A).
$$
Linear isometries
for the Ky Fan $k$-norm  have been studied.
Seeing Theorem \ref{thm3.1}, one may think that a similar extension
for the Ky Fan $k$-norm can be obtained by similar arguments.
It turns out that this can only be done for the complex case because there are real linear isometries for Ky Fan $k$-norms that do not preserve disjointness; see
\cite{JLL,LT}. This reinforces the fact that proof techniques
for complex matrices may not apply to real matrices, and
it is quite remarkable that a uniform proof
of Theorem \ref{thm1} can be used for both real
and complex matrices. In any event, we have
the following  theorem  supplementing  \cite[Theorem 1.1]{LPS}, in which the
linear map $\Phi: \bM_{m,n}(\IC) \rightarrow \bM_{r,s}(\IC)$ is assumed to satisfy  that
\begin{align*}
F_k(\Phi(A)) = F_{k'}(A), \quad \text{for all}\ A \in \bM_{m,n}(\IC).
\end{align*}

\begin{theorem} \label{thm3.2}
Suppose $2\leq k' \leq \min\{m,n\}$ and $1\leq k \le \min\{r,s\}$.
The following conditions are equivalent  for a linear map
$\Phi: \bM_{m,n}(\IC) \rightarrow \bM_{r,s}(\IC)$.
\begin{enumerate}[(a)]
\item $F_k(\Phi(A)) = F_{k'}(A)$ for all $A \in \bM_{m,n}(\IC)$
with rank at most $2$.

\item There are unitary matrices $U \in \bM_r(\IC)$, $V\in \bM_s(\IC)$ and positive-definite
diagonal matrices $Q_1, Q_2$ (maybe vacuous) of size $q_1, q_2$ such that $k\geq 2(q_1+q_2)$, $Q_1\oplus Q_2$ has trace $1$, and
\begin{align}\label{eq:F-norm}
  \Phi(A)=
U\begin{pmatrix}
 A \otimes Q_{1}  & 0 & 0\cr
0 & A^t \otimes Q_{2}  & 0\cr
0  & 0 & 0\cr\end{pmatrix}V.
\end{align}
\end{enumerate}
\end{theorem}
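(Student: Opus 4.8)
The plan is to mirror the proof of Theorem~\ref{thm3.1}: reduce condition~(a) to disjointness preservation, apply Theorem~\ref{thm1}, and then read off the constants.

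The implication (b) $\implies$ (a) is a direct computation. If $A\in\bM_{m,n}(\IC)$ has rank at most $2$, with singular values $a_1\ge a_2\ge 0$, then by \eqref{eq:F-norm} the nonzero singular values of $\Phi(A)$ are the products $a_iq$, where $i\in\{1,2\}$ and $q$ runs over the diagonal entries of $Q_1$ and of $Q_2$; there are at most $2(q_1+q_2)\le k$ of them, so $F_k(\Phi(A))$ is their sum, which equals $(a_1+a_2)(\tr Q_1+\tr Q_2)=a_1+a_2$. Since $k'\ge 2$ and $\rank A\le 2$, this is exactly $F_{k'}(A)$.

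For (a) $\implies$ (b), first observe that $k'\ge 2$ forces $F_{k'}(X)=s_1(X)+s_2(X)$ whenever $\rank X\le 2$. Thus, if $E,F\in\bM_{m,n}(\IC)$ are disjoint rank one partial isometries, then for all $\alpha,\beta\in\IC$ the matrix $\alpha E+\beta F$ has rank at most $2$ with singular values $|\alpha|$ and $|\beta|$, and (a) gives
\[
F_k(\alpha\Phi(E)+\beta\Phi(F))=F_{k'}(\alpha E+\beta F)=|\alpha|+|\beta|\qquad\text{for all }\alpha,\beta\in\IC.
\]
In particular (take $\beta=0$, then $\alpha=0$) $F_k(\Phi(E))=F_k(\Phi(F))=1$, so with $\alpha=1$ the triangle inequality $F_k(\Phi(E)+\beta\Phi(F))\le F_k(\Phi(E))+F_k(\beta\Phi(F))$ is an equality for every $\beta\in\IC$. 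I would then invoke the equality case of the triangle inequality for the complex Ky Fan $k$-norm to conclude $\Phi(E)^*\Phi(F)=0_s$ and $\Phi(E)\Phi(F)^*=0_r$, i.e.\ that $\Phi$ preserves disjointness of rank one partial isometries. By the remarks following Theorem~\ref{thm1} (for which it suffices to test disjointness on rank one partial isometries), $\Phi$ then has the form \eqref{s-form} for some $U\in\bU_r$, $V\in\bU_s$ and positive diagonal matrices $Q_1\in\bM_{q_1}$, $Q_2\in\bM_{q_2}$, one of which may be vacuous.

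It remains to pin down the constants in (b). Put $t=\tr Q_1+\tr Q_2$ and list the diagonal entries of $Q_1$ and $Q_2$ together as $q^{(1)}\ge\cdots\ge q^{(q_1+q_2)}>0$. Testing (a) on $A_\varepsilon=E_{11}+\varepsilon E_{22}$ with $\varepsilon>0$: by \eqref{s-form} the nonzero singular values of $\Phi(A_\varepsilon)$ are $q^{(i)}$ and $\varepsilon q^{(i)}$ for $1\le i\le q_1+q_2$, while $F_{k'}(A_\varepsilon)=1+\varepsilon$. If $k\le q_1+q_2$, then for all small $\varepsilon$ the $k$ largest of these singular values are $q^{(1)},\dots,q^{(k)}$, so $F_k(\Phi(A_\varepsilon))$ would not depend on $\varepsilon$, contradicting $F_k(\Phi(A_\varepsilon))=1+\varepsilon$; hence $k>q_1+q_2$, and taking $\varepsilon=0$ gives $t=F_k(\Phi(E_{11}))=1$. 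For small $\varepsilon>0$ one then has $F_k(\Phi(A_\varepsilon))=t+\varepsilon\,(q^{(1)}+\cdots+q^{(j)})$, where $j=\min\{k-q_1-q_2,\,q_1+q_2\}$; comparing with $1+\varepsilon=t+\varepsilon t$ and using $q^{(i)}>0$ forces $j=q_1+q_2$, that is, $k\ge 2(q_1+q_2)$.

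I expect the main obstacle to be the single step where one extracts the disjointness $\Phi(E)^*\Phi(F)=0_s$, $\Phi(E)\Phi(F)^*=0_r$ from the identity $F_k(\alpha\Phi(E)+\beta\Phi(F))=|\alpha|+|\beta|$; this is a Ky Fan $k$-norm analogue of the McCarthy-type equality lemma that is used for Schatten $p$-norms in the proof of Theorem~\ref{thm3.1}. The complex scalar field is essential here: over $\IR$ the corresponding data does not force disjointness --- this is exactly why there exist real linear Ky Fan isometries that fail to preserve disjointness (see \cite{JLL,LT}) --- so the argument must genuinely exploit that $\beta$ ranges over all of $\IC$, not merely over $\IR$. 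A natural route is through the supporting functionals of $F_k$, that is, matrices $B$ with $\max\{\|B\|_{\mathrm{op}},\,\|B\|_1/k\}\le 1$: the triangle equalities should force, for each unit scalar $\omega$, a common such $B$ that attains $F_k$ at $\Phi(F)$ and satisfies $\tr(B^*\Phi(E))=\omega$, and letting $\omega$ run over the unit circle should force the relevant singular subspaces of $\Phi(E)$ and $\Phi(F)$ to be mutually orthogonal.
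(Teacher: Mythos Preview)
Your overall strategy matches the paper's exactly: show that (a) forces $\Phi$ to preserve disjoint rank one pairs, invoke Theorem~\ref{thm1}, then test on $A_\varepsilon=E_{11}+\varepsilon E_{22}$ to extract the trace condition and the inequality $k\ge 2(q_1+q_2)$. Your computation of the constants is essentially the paper's, reorganized: the paper treats the three ranges $k\le q$, $q<k<2q$, $k\ge 2q$ (with $q=q_1+q_2$) separately, while you fold the middle case into the observation that $q^{(1)}+\cdots+q^{(j)}=1$ with $j=\min\{k-q,q\}$ forces $j=q$ since the full sum is already $1$ and all entries are positive. Both arguments are correct.

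The one substantive difference is the step you flag yourself. The paper does \emph{not} re-derive the implication ``$F_k(\alpha\Phi(E)+\beta\Phi(F))=|\alpha|+|\beta|$ for all $\alpha,\beta\in\IC$ $\Rightarrow$ $\Phi(E)\perp\Phi(F)$''; it simply cites \cite[Lemma~2.2]{LPS} for the fact that condition~(a) forces $\Phi$ to preserve disjoint rank one pairs, and moves on. Your proposed route through supporting functionals of $F_k$ (i.e.\ the dual unit ball $\{B:\max(\|B\|_{\mathrm{op}},\|B\|_1/k)\le 1\}$) is indeed the standard way such a lemma is proved, and your remark that the argument must genuinely use $\beta\in\IC$ rather than $\beta\in\IR$ is exactly right --- this is why the theorem is stated only over $\IC$. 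But as written this step is a sketch, not a proof; if you want a self-contained argument you would need to carry it out, and if not, you should cite \cite[Lemma~2.2]{LPS} as the paper does.
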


\noindent
\begin{proof}\rm The implication  (b) $\implies$
(a)  is plain.

(a) $\implies$ (b). By \cite[Lemma 2.2]{LPS}, $\Phi$ preserves disjoint rank one pairs.
By Theorem \ref{thm1},
$\Phi$ carries the form \eqref{eq:F-norm}.
Consider $A_\epsilon=E_{11}+\epsilon E_{22}$ for $0\leq\epsilon< 1$.
Using \eqref{eq:F-norm}, we can assume
$$
\Phi(A_\epsilon)= \lambda_1 A_\epsilon \oplus \lambda_2 A_\epsilon \oplus \cdots\oplus \lambda_q A_\epsilon\oplus 0
$$
for some fixed scalars $\lambda_1\geq \lambda_2 \geq\cdots\geq \lambda_q >0$ with $q=q_1+q_2$.

Suppose $k\leq q$ first.  Since $k'\geq 2$, we  have
\begin{align*}
1+\epsilon &=F_{k'}(A_\epsilon)
= F_k(\lambda_1 A_\epsilon \oplus \lambda_2 A_\epsilon \oplus \cdots\oplus \lambda_q A_\epsilon\oplus 0)\\
&=     \lambda_1 + \lambda_2 + \cdots +  \lambda_{k}, \quad \text{when}\ 0\leq \epsilon\lambda_1 \leq \lambda_{k}.
\end{align*}
This yields a contradiction, because $[0, \lambda_{k}/\lambda_{1}]$ contains infinitely many points $\epsilon$.

Suppose $0< r= k-q<q$.  Then we  have
\begin{align*}
1+\epsilon &=F_{k'}(A_\epsilon)
= F_k(\lambda_1 A_\epsilon \oplus \lambda_2 A_\epsilon \oplus \cdots\oplus \lambda_q A_\epsilon\oplus 0)\\
&= \left\{
  \begin{array}{ll}
    \lambda_1 + \lambda_2 + \cdots +  \lambda_{q}, & \text{when}\  \epsilon=0, \\
    \lambda_1 + \lambda_2 + \cdots +  \lambda_{q} + \epsilon\lambda_1  + \cdots + \epsilon\lambda_{r}, & \text{when}\  \epsilon\lambda_{r+1}\leq \lambda_{q}. \\
  \end{array}
\right.
\end{align*}
This implies $ \lambda_1 + \lambda_2 + \cdots +  \lambda_{q}=1$, and $1+\epsilon =1 + \epsilon \lambda_1  + \cdots + \epsilon\lambda_{r}$ for all $0< \epsilon \leq  \lambda_q/\lambda_{r+1}$.
This  gives us the contradiction that $\lambda_{r+1} = \cdots  = \lambda_{q}=0$.

Hence,   $k\geq 2q$.  In this case, we have
\begin{align*}
1+\epsilon &=F_{k'}(A_\epsilon)
= F_k(\lambda_1 A_\epsilon \oplus \lambda_2 A_\epsilon \oplus \cdots\oplus \lambda_q A_\epsilon\oplus 0)\\
&=     (1+\epsilon)(\lambda_1 + \lambda_2 + \cdots +  \lambda_{q}),\quad \text{when}\  \epsilon \in [0,1).
\end{align*}
This gives $1=\lambda_1 + \lambda_2 + \cdots +  \lambda_{q}$, which equals the trace of $Q_1\oplus Q_2$.
 \end{proof}

\section{Final remarks and future research}\label{sec:5}

It would be interesting to extend our results in Sections 2 and  3
to the (real or complex) linear space $B(H, K)$ of bounded linear operators between
infinite dimensional Banach spaces $H$ and $K$, or to general
JB*-triples.
Our approach depends on the singular value decomposition of matrices,
which is a finite dimensional feature.
New techniques will be needed to extend our results.

\medskip
To conclude the paper, we list
several comments and questions concerning the results in Section 4.

\begin{enumerate}
\item As pointed out in \cite{CLP04}, the problem for the operator norm, i.e.,
Ky Fan $1$-norm, is difficult.

\item
Many  real linear isometries for Ky Fan $k$-norms also preserve
disjointness (although there are exceptions). It would be nice to
investigate a version of
Theorem \ref{thm3.2}  such that
the conclusion also hold for real matrices.

\item For any linear isometry which preserves disjoint rank one pairs,  we can apply Theorem \ref{thm1}.
It is interesting to characterize such norms other than the Schatten $p$-norms and the Ky Fan $k$-norms.
Suggested by the asserted form \eqref{eq:F-norm}, we should put emphasis on unitarily invariant norms.

\item We have similar results for real symmetric and complex Hermitian matrices.
Besides $S_p(A)$ and $F_k(A)$, can we do it for the
$k$-numerical radius on
Hermitian matrices $\bH_n$ defined by
$$w_k(A) =\max\{ \tr(AR): R^* = R = R^2, \tr R = k\}?$$

\item In fact, one can also ask for characterizations of
$k$-numerical radius preservers $\Phi: \bM_n \rightarrow \bM_r$.

\item One may consider linear preservers or non-linear preservers for other types of norms or functions
on rectangular matrices, Hermitian, symmetric, or skew-symmetric matrix spaces that are related to disjointness preserving maps.

\end{enumerate}

\section*{Acknowledgment}

Li is an affiliate member of the Institute for Quantum Computing,
University of Waterloo. He is an honorary professor of Shanghai
University. His research was supported by USA NSF grant DMS 1331021,
Simons Foundation Grant 351047, and NNSF of China Grant 11571220. This
research
was started when he visited Taiwan in 2018 supported by grants from Taiwan MOST.
He would like to express
his gratitude to the hospitality of several institutions, including the
Academia Sinica,
National Taipei University of Science and Technology,
National Chung Hsing University, and National Sun
Yat-sen University. He would also like to thank Dr.\ Daniel Puzzuoli
for some helpful discussions.

Tsai, Wang and Wong are supported by Taiwan MOST grants
105-2115-M-027-002-MY2,
106-2115-M-005-001-MY2 and  106-2115-M-110-006-MY2,
respectively.

\end{document}